\DeclareMathOperator{\Res}{Res}
\theoremstyle{plain}
\newtheorem{theorem}{Theorem}[section]
\newtheorem*{theorem*}{Theorem}
\newtheorem{proposition}[theorem]{Proposition}
\newtheorem{corollary}[theorem]{Corollary}
\newtheorem{lemma}[theorem]{Lemma}
\theoremstyle{definition}
\newtheorem{definition}[theorem]{Definition}
\newtheorem{remark}[theorem]{Remark}
\newcommand{\enm}[1]{\ensuremath{#1}}          %
\newcommand{\cal}[1]{\mathcal{#1}}
\renewcommand{\bar}[1]{\overline{#1}}
\newcommand{\CC}{\enm{\mathbb{C}}}
\newcommand{\NN}{\enm{\mathbb{N}}}
\newcommand{\RR}{\enm{\mathbb{R}}}
\newcommand{\ZZ}{\enm{\mathbb{Z}}}
\newcommand{\PP}{\enm{\mathbb{P}}}
\newcommand{\LL}{\enm{\mathbb{L}}}
\newcommand{\Bb}{\enm{\cal{B}}}
\newcommand{\Ee}{\enm{\cal{E}}}
\newcommand{\Ii}{\enm{\cal{I}}}
\newcommand{\Kk}{\enm{\cal{K}}}
\newcommand{\Oo}{\enm{\cal{O}}}
\newcommand{\Ss}{\enm{\cal{S}}}
\renewcommand{\phi}{\varphi}
\renewcommand{\theta}{\vartheta}
\renewcommand{\epsilon}{\varepsilon}
\begin{document}

\title[Twistor lines on algebraic surfaces]{Twistor lines on algebraic surfaces}

\author[A. Altavilla]{A. Altavilla${}^{\ddagger}$}\address{Altavilla Amedeo: Dipartimento Di Matematica, Universit\`a di Roma ``Tor Vergata", Via Della Ricerca Scientifica 1, 00133, Roma, Italy} \email{altavilla@mat.uniroma2.it}

\author[E. Ballico]{E. Ballico${}^{\dagger}$}\address{} \email{}\address{Edoardo Ballico: Dipartimento Di Matematica, Universit\`a di Trento, Via Sommarive 14, 38123, Povo, Trento, Italy} \email{edoardo.ballico@unitn.it}

\thanks{${}^{\dagger,\ddagger}$GNSAGA of INdAM;
 ${}^{\dagger}$MIUR PRIN 2015 ``Geometria delle variet\`a algebriche'';
 ${}^{\ddagger}$FIRB 2012 {\sl Geometria differenziale e teoria geometrica delle funzioni}, SIR grant {\sl ``NEWHOLITE - New methods in holomorphic iteration''} n. RBSI14CFME and SIR grant {\sl AnHyC - Analytic aspects in complex and hypercomplex geometry} n. RBSI14DYEB. The first author wishes to thank also the Clifford Research Group at Ghent University where part of this project was carried out.}

\date{\today }

\subjclass[2010]{Primary 14D21, 53C28; secondary 32L25}
\keywords{Twistor Fibration; Lines on Surfaces; Pl\"ucker Quadric}
%Rational and Ruled Surfaces;
%; Slice Regularity

\begin{abstract} 

We give quantitative and qualitative results on the family of surfaces in $\mathbb{CP}^3$ containing finitely many twistor lines. 
We start by analyzing the ideal sheaf of a finite set of disjoint lines $E$. We prove that its general element is a smooth surface containing $E$ and no other line. Afterwards we prove that twistor lines are Zariski dense in the Grassmannian $Gr(2,4)$. Then, for any degree $d\ge 4$, we give lower bounds on the maximum number of twistor lines
contained in a degree $d$ surface. The smooth and singular cases are studied as well as the $j$-invariant one.

\end{abstract}
\maketitle

\section{Introduction and Main Results}

Given a four dimensional Riemannian manifold $(M^{4},g)$, its twistor space  $Z(M)$ is the total space of a  bundle 
containing all the complex structures that can be defined on $M$ and are compatible with $g$. If $(M,g)$ is anti-self-dual, then $Z(M)$
is a complex manifold of (complex) dimension $3$. 
Moreover, a complex 3-manifold $Z$ is the twistor space of some 4-dimensional Riemannian manifold $M$
if and only if it admits a fixed-point-free anti-holomorphic involution $j:Z\to Z$ and a foliation by $j$-invariant
rational curves $\mathbb{CP}^{1}$ each of which has normal bundle $\Oo(1)\oplus \Oo(1)$ (see e.g.~\cite{lebrun}).

Since any complex structure on $M$ compatible with $g$ is as well compatible with any conformal
equivalent metric $e^{f}g$, the whole theory is invariant under conformal transformations of $M$.

The first interesting example is given by the 4-sphere $\mathbb{S}^{4}$, identified with the \textit{left} quaternionic
projective line $\mathbb{HP}^{1}$, whose twistor space is given by $\mathbb{CP}^{3}$ with fibers
$\mathbb{CP}^{1}$, i.e.:
$$
\mathbb{CP}^{1}\to\mathbb{CP}^{3}\stackrel{\pi}{\to}\mathbb{HP}^{1},
$$
where $\pi$ is the real analytic submersion defined by
$$
\pi[z_{0},z_{1},z_{2},z_{3}]=[z_{0}+z_{1}j,z_{2}+z_{3}j].
$$
The fibration $\pi$ is the main object of this paper and, from now on, we focus our attention only on it.
On the affine subset $\{ [p,q]\in\mathbb{HP}^{1}\,|\, p\neq 0\}$, if $q=q_{1}+q_{2}j$, we have that the fibers  of $\pi$ are explicitly given by 
$$
\begin{cases}
z_{2}=z_{0}q_{1}-z_{1}\bar q_{2}\\
z_{3}=z_{0}q_{2}+z_{1}\bar q_{1}.
\end{cases}
$$

In this case the fibers are identified with projective lines $l$ such that $j(l)=l$,
where $j:\mathbb{CP}^{3}\to\mathbb{CP}^{3}$ is the fixed-point-free anti-holomorphic involution given by
$$
j[z_{0},z_{1},z_{2},z_{3}]\mapsto [-\bar z_{1}, \bar z_{0}, -\bar z_{3},\bar z_{2}].
$$
Notice that the map $j$ coincides with the map induced, via $\pi^{-1}$, by quaternionic left multiplication by $j$ (see e.g.~\cite[Formula (5.8)]{gensalsto}).
We now state a formal definition for our main object.
\begin{definition}
A projective line $l\subset\CC\PP^{3}$ is said to be a \textit{twistor line} if $j(l)=l$, i.e.: if $l$ is a fibre for $\pi$.
\end{definition}

In this setting an important theme is the analysis of complex surfaces in $\mathbb{CP}^{3}$ transverse
to fibers. This because, any surface $\Ss\subset\CC\PP^{3}$, that can be considered a graph for $\pi^{-1}$,
produces a complex structure compatible with the standard round metric on $\pi(\Ss)\subset\mathbb{S}^{4}$.
In fact, the map $\pi$ restricted to any degree $d$ algebraic surface $Y$ can be considered as a 
$d$-fold branched covering over $\mathbb{S}^{4}$ and the set of twistor lines contained in $\Ss$ is contained in the
ramification locus. In particular, in this framework, the number of twistor lines contained in a surface plays an important role being an invariant under conformal transformations of $\mathbb{S}^{4}$ (for more details see, e.g.,~\cite{APS}).

Motivated by some recent result on the geometry of twistor lines in particular cases 
(degree 1~\cite{altavilla, shapiro}, degree 2~\cite{chirka,sv1}, degree 3~\cite{altavilla, altavillasarfatti, armstrong, APS, sv2} and degree 4~\cite{gensalsto}, see also~\cite[arXiv version v1]{sv1}), in this paper
we give some general algebro-geometric result related to them in the same spirit of the papers~\cite{ab,ballico}.

After the introduction, this paper has 2 sections. Section~\ref{s2} contains preparatory material
and standard tools from algebraic-geometry. The only
main result is Theorem~\ref{i1} below which regards surfaces containing a fixed set of lines.
Before state it we set the following standard notation: for any union of lines $E$ let $\Ii _E$ be the ideal sheaf of $E$
(see Section~\ref{s2} for definitions and properties). We denote by $|\Ii _E(d)|$ the projective space associated to the vector space $H^0(\Ii_E(d))$. 

\begin{remark}\label{a1}
Let $E$ be a union of $s$ disjoint lines.
Recall that, for $d\ge s >0$, the set $|\Ii _E(d)|$ is a
projective space of dimension $\binom{d+3}{3} - s(d+1)-1$. In fact, it is well known that for integers $t\ge s-1$ such that $t>0$ and $s\ge 0$, if $E\subset \CC\PP^3$ is a union of $s$ disjoint lines, then $h^1(\Ii
_E(t)) =0$ and $h^0(\Ii _E(t)) =\binom{t+3}{3} -s(t+1)$ (see e.g.~\cite[Corollary 1.10]{sidman}, where ``d-regular'' means exactly that  $h^1=0$).
\end{remark}

\begin{theorem}\label{i1}
Fix  integers $d\ge k >0$ with $d\ge 4$. Let $E\subset \CC\PP^3$ be a union of $k$ disjoint lines.  A general $Y\in |\Ii _E(d)|$ is a smooth degree $d$ hypersurface
containing $E$ and containing no line $L\subset \CC\PP^3$ with $L\cap E=\emptyset$.
\end{theorem}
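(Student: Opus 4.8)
The plan is to prove the two assertions — smoothness of a general member, and the absence of extra disjoint lines — by separate Bertini-type / dimension-count arguments, using the base-point freeness information encoded in Remark~\ref{a1}.

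\emph{Smoothness.} Since $E$ is a union of $k$ disjoint lines and $d\ge k$, Remark~\ref{a1} gives $h^1(\Ii_E(d))=0$, so the linear system $|\Ii_E(d)|$ restricts surjectively onto $|\Oo_{\PP^3}(d)|$ away from $E$; concretely, on $\PP^3\setminus E$ the system $|\Ii_E(d)|$ is base-point free, so by Bertini's theorem a general $Y\in|\Ii_E(d)|$ is smooth away from $E$. Near $E$ one argues locally: along each line $\ell_i\subset E$, the sections of $\Ii_E(d)$ are exactly the degree-$d$ forms vanishing on $\ell_i$, and because $d\ge 2$ one checks that the differentials of these forms span the conormal directions transverse to $\ell_i$ at every point of $\ell_i$ (equivalently, $\Ii_E(d)/\Ii_E^2(d)$ is globally generated along $E$, or even more simply the first-order system $|\Ii_E(d)|$ separates tangent directions transverse to $E$). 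Hence the generic $Y$ is smooth along $E$ as well, and therefore smooth everywhere. I would phrase this via the standard fact that a general member of a linear system with no base points \emph{in the scheme-theoretic sense} together with surjectivity onto $1$-jets along the fixed locus is smooth; the disjointness of the $\ell_i$ makes the local analysis near $E$ identical to the single-line case.

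\emph{No extra disjoint line.} Consider the incidence variety
\[
\Gamma=\{(L,Y)\ :\ L\subset\CC\PP^3\text{ a line},\ L\cap E=\emptyset,\ L\subset Y\in|\Ii_E(d)|\}
\]
with its two projections $p$ to $\Gr(2,4)$ and $q$ to $|\Ii_E(d)|$. The open subset $U\subset\Gr(2,4)$ of lines disjoint from $E$ has dimension $4$. For a fixed $L\in U$, a member $Y\in|\Ii_E(d)|$ containing $L$ must lie in $H^0(\Ii_{E\cup L}(d))$; since $E\cup L$ is again a union of $k+1$ disjoint lines and $d\ge k+1$ fails only when $d=k$, I will split into the generic case $d\ge k+1$, where Remark~\ref{a1} applies directly and gives that the fibre $p^{-1}(L)$ is a linear subspace of codimension exactly $d+1$ in $|\Ii_E(d)|$, and the boundary case $d=k$, where one instead uses $h^1(\Ii_{E\cup L}(d))=0$ for $d\ge (k+1)-1=k$ (still covered by the cited regularity bound with $t=d\ge s-1$), so the codimension count is unchanged. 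Thus every component of $\Gamma$ dominating $U$ has dimension $\le 4+\bigl(\dim|\Ii_E(d)|-(d+1)\bigr)=\dim|\Ii_E(d)|+3-d$. Since $d\ge 4>3$, this is strictly less than $\dim|\Ii_E(d)|$, so $q(\Gamma)$ is a proper closed subset of $|\Ii_E(d)|$; a general $Y$ avoids it and hence contains no line disjoint from $E$.

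\emph{Main obstacle.} The delicate point is the local smoothness analysis along $E$: one must verify that $\Ii_E(d)$ genuinely generates enough jets transverse to each line so that Bertini applies at points of the fixed locus, and that $d\ge 4$ (rather than merely $d\ge 2$) is not secretly needed there — the degree hypothesis $d\ge 4$ is really forced by the second part, not the first. A clean way to handle smoothness along $E$ is to exhibit, for a fixed smooth reference surface $Y_0\supset E$ (which exists by a direct construction using products of linear forms, or by Remark~\ref{a1} itself once base-point freeness off $E$ is known), the pencil $\lambda Y_0+\mu Y$ and check transversality; alternatively, invoke that a general divisor in a linear system is smooth wherever the system's $1$-jet evaluation map is surjective, and compute that surjectivity along $E$ reduces to the statement $h^1(\Ii_E^{(1)}(d))$-type vanishing that follows again from $d\ge k$ and disjointness. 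I expect this to be the only step requiring genuine care; the incidence-variety dimension count is routine once the fibre dimension is pinned down by the regularity statement already quoted in Remark~\ref{a1}.
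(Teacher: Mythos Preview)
Your argument for the absence of extra disjoint lines is correct and coincides with the paper's: both compute that $|\Ii_{E\cup L}(d)|$ has codimension $d+1$ in $|\Ii_E(d)|$ via Remark~\ref{a1} and compare with $\dim Gr(2,4)=4$.

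The smoothness argument, however, has a genuine gap. First a minor point: $h^1(\Ii_E(d))=0$ does not by itself imply that $|\Ii_E(d)|$ is base-point free off $E$ --- it only says that $E$ imposes independent conditions on degree-$d$ forms, which is a statement about restriction \emph{to} $E$, not away from it. What you actually need is global generation of $\Ii_E(d)$, and this does follow from the Castelnuovo--Mumford regularity that Remark~\ref{a1} cites (or from an explicit product-of-planes construction), so this is easily repaired. The serious issue is smoothness along $E$. You correctly identify that one must show the $1$-jet map is surjective at each $q\in E$, equivalently that $|\Ii_{2q\cup E}(d)|$ has codimension $2$ in $|\Ii_E(d)|$, but you do not prove it. The proposal to exhibit a single smooth $Y_0\supset E$ is circular (and in any case one smooth member does not force the general member to be smooth along $E$), and the assertion that the required vanishing ``follows again from $d\ge k$ and disjointness'' is precisely the content that needs proof. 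The paper establishes this via Lemma~\ref{a8}: for $q\in E$ one shows $h^1(\Ii_{2q\cup E}(d))=0$ by taking a plane $H$ through the line of $E$ containing $q$, computing $\mathrm{Res}_H(2q\cup E)=\{q\}\cup(E\setminus L)$, and reducing through the residual exact sequence to Proposition~\ref{a2} and Lemma~\ref{a6}. Rather than invoking Bertini off $E$, the paper in fact runs a uniform dimension count of $\Sigma_q=|\Ii_{2q\cup E}(d)|$ over all $q\in\CC\PP^3$, with a separate analysis (Lemma~\ref{a7}) of the delicate case $k=d$, where there is a $2$-dimensional locus of points $q\notin E$ lying on a transversal to every line of $E$; one must check this locus sits inside the quadric through three of the lines.
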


The proof of the previous theorem passes through a number of lemmas analyzing the first two
cohomology numbers of $\Ii_{E}(d)$ and of $\Ii_{2q\cup E}(d)$, for some fat point $2q$.

In Section 3, we add the hypothesis that the set $E$ of Theorem~\ref{i1}
is composed by twistor lines. Therefore we have firstly to explain what a general set of twistor lines
is.
 Let $Gr(2,4)$ be the Grassmannian of lines in the complex projective space and $\Lambda \subset
Gr (2,4)$ denote the set of all twistor lines. Topologically we have $\Lambda \cong \mathbb{S}^{4}$.
In the beginning of the Section, by means of what we call   \textit{Density Lemma}~\ref{b2}, we give meaning to  the words ``$k$ general twistor lines'' for any integer $k>0$.
A first consequence of the Density Lemma is Corollary~\ref{c3.3} stating that, for any $s$ greater or equal to 5, there 
are $s$ twistor lines $L_{1},\dots, L_{s}$ such that no other line intersects all of them. We recall that in~\cite{APS} the 
authors give conditions in order to establish whether 5 twistor lines lie on a cubic. This property implies that 
there exist other two distinct lines intersecting all the given five twistor lines.

Now, before stating the main results of the third section, for any integer $d>0$ we set the following quantities
that will be our lower bounds:
\begin{itemize}
\item  $\nu(d):= \lfloor (\binom{d+3}{3}-1)/(d+1)\rfloor$;
\item  $\nu_n(d):= \nu(d-1)$ if $d\ge 2$ and $\nu_n (d):= 0$ if $d\le 1$;
\item  $\nu_s (d):= \nu(d-3)$ if $d\ge 4$ and $\nu_s (d):= 0$ if $d\le 3$;
\item $\nu_j (d):= \nu_n(d-8)=\nu (d-9)$ if $d \ge 9$ and $\nu_j (d):= 0$ if $d\le 8$. 
\end{itemize}
The 3 subscripts for $\nu$ stand for \textit{normal}, \textit{smooth} and $j$-\textit{invariant} as the following results will suggest.
More explicitly, 
 \begin{equation}\label{eqnu}
  \begin{cases}
  \nu(d) = (d^2+5d)/6\quad \mbox{if}\quad d\equiv 0,1\pmod{3}\\
\nu(d) =(d^2+5d+4)/6\quad \mbox{if}\quad d\equiv 2\pmod{3},
 \end{cases}
 \end{equation} 
and, if $d\ge 3$ we have 
 $$
 \begin{cases}
\nu_s (d) = (d-3)(d+2)/6\quad \mbox{if}\quad d\equiv 0,1\pmod{3}\\
\nu_s (d) = (d^2-d -  2)/6\quad \mbox{if}\quad d\equiv 2\pmod{3}. 
 \end{cases}
$$ 
Notice that, for $d\ge 2$, $\nu(d)<d^{2}$ and that $ \nu(d) \sim \nu_s (d) \sim \nu_n(d) \sim \nu_j (d) \sim d^2/6$ for $d\to \infty$.

%What follows is a table  containing the first 20 values of these quantities.
%
%
%
%
%
%
%
%\begin{center}
%\begin{center}
%  \begin{tabular}{ | l || c | c | c | c | c | c | c | c | c | c | c | c | c | c | c | c | c | c | c | c |}
%    \hline
%    $d$ & 1 & 2 & 3 & 4 & 5 & 6 & 7 & 8 & 9 & 10 & 11 & 12 & 13 & 14 & 15 & 16 & 17 & 18 & 19 & 20 \\\hline \hline
%    $\nu(d)$ & 1 & 3 & 4 & 6 & 9 & 11 & 14 & 18 & 21 & 25 & 30 & 34 & 39 & 45 & 50 & 56 & 63 & 69 & 76 & 84  \\ 
%    \hline
%    $\nu_n(d)$ & 0 & 1 & 3 & 4 & 6 & 9 & 11 & 14 & 18 & 21 & 25 & 30 & 34 & 39 & 45 & 50 & 56 & 63 & 69 & 76  \\ 
%    \hline
%$\nu_s (d)$ & 0 & 0 & 0 & 1 & 3 & 4 & 6 & 9 & 11 & 14 & 18 & 21 & 25 & 30 & 34 & 39 & 45 & 50 & 56 & 63   \\ 
%\hline
%$\nu_j (d)$ & 0 & 0 & 0 & 0 & 0 & 0 & 0 & 0 & 0 &  1 & 3 & 4 & 6 & 9 & 11 & 14 & 18 & 21 & 25 & 30   \\ 
%\hline
%
%  \end{tabular}
%\end{center}
%\end{center}

%Let $d\ge 5$ and $E\subset \CC\PP^3$ be the union of  $ \nu (d)$ twistor lines. 
Recalling that a surface is said to be \textit{integral} if it is reduced and irreducible, we can state our first main result.

\begin{theorem}\label{i2}
Fix an integer $d \ge 4$. 
\begin{enumerate}
\item There is an irreducible degree $d$ surface $Y\subset \CC\PP^3$ containing $\nu(d)$ general twistor lines.
\item There is an irreducible degree $d$ surface $Y\subset \CC\PP^3$ containing $\nu_n(d)$ twistor lines, none of them
intersecting
$\mathrm{Sing}(Y)$, and with $\mathrm{Sing}(Y)$ finite.
\item There is a smooth degree $d$ surface $Y\subset \CC\PP^3$ containing $\nu_s (d)$ general twistor lines.
\end{enumerate}
\end{theorem}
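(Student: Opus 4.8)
The plan is to deduce all three statements from Theorem~\ref{i1} together with the Density Lemma~\ref{b2}, by choosing the right number $k$ of disjoint lines and imposing suitable singularity or adjunction conditions on the degree-$d$ hypersurface. For part (1), I would set $k=\nu(d)$ and use the Density Lemma to produce $k$ general twistor lines $L_1,\dots,L_k$; these are automatically pairwise disjoint since twistor lines with a common point would be interchanged by $j$ to a non-real configuration, and ``general'' forces genericity inside $\Lambda$. By Remark~\ref{a1}, since $\nu(d)=\lfloor(\binom{d+3}{3}-1)/(d+1)\rfloor$ is the largest integer with $\binom{d+3}{3}-\nu(d)(d+1)-1\ge 0$, the linear system $|\Ii_E(d)|$ with $E=L_1\cup\cdots\cup L_k$ is nonempty of the expected dimension, hence a general element $Y$ is a degree-$d$ surface containing $E$. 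Irreducibility should follow because a reducible $Y$ would split the $k$ lines among components of lower degree, but a general member of a base-point-free-enough system cannot be reducible for $d\ge 4$; alternatively one invokes Theorem~\ref{i1} (the general $Y$ is even smooth when $k\le d$, and for larger $k$ one argues irreducibility directly via a Bertini-type argument after checking the system has no fixed component). The explicit formula~\eqref{eqnu} for $\nu(d)$ is then just bookkeeping.

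For part (2), the idea is to realize $Y$ as a hypersurface that is singular but only at finitely many points away from the twistor lines. I would take $k=\nu_n(d)=\nu(d-1)$ general twistor lines $E$, and additionally a general point $p\in\CC\PP^3\setminus E$, and look at $|\Ii_{E\cup 2p}(d)|$, where $2p$ is the first-order neighborhood of $p$ (the fat point appearing in the lemmas mentioned after Theorem~\ref{i1}). Imposing the double point $2p$ costs $4$ conditions, and $\nu(d-1)$ is calibrated exactly so that $\binom{d+3}{3}-\nu(d-1)(d+1)-4\ge 0$: indeed $\binom{d+3}{3}-\binom{d+2}{3}=\binom{d+2}{2}$, and one checks $\binom{d+2}{2}\ge (d+1)+4$ for $d\ge 4$, so after removing one line's worth of conditions there is still room for the fat point, keeping $h^1=0$. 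Then a general such $Y$ is singular at $p$, and by the usual generic-smoothness argument (Bertini off the base locus plus the fact that $E$ and $p$ are general) $\mathrm{Sing}(Y)$ is finite and disjoint from $E$; Theorem~\ref{i1} handles the ``no extra twistor lines sneaking through the singular locus'' bookkeeping only implicitly here, but what is really needed is just that $E$ consists of twistor lines lying on $Y$ and missing $\mathrm{Sing}(Y)$, which is immediate.

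For part (3), I want a \emph{smooth} $Y$ containing as many twistor lines as possible. Here the constraint is different: a line $L$ on a smooth degree-$d$ surface contributes, and the adjunction/normal-bundle count shows that lines can be forced to lie on a smooth surface only up to a bound governed by $\nu_s(d)=\nu(d-3)$. Concretely I would take $k=\nu(d-3)$ general twistor lines $E$ (disjoint, by the Density Lemma), observe that $\nu(d-3)\le d$ for all $d\ge 4$ (a quick check: $\nu(d-3)=(d-3)(d+2)/6$ or $(d^2-d-2)/6$, which is $\le d$ precisely in this range — this is where the ``$d-3$'' shift is exactly what smoothness can afford), and then apply Theorem~\ref{i1} directly: since $k\le d$ and $d\ge 4$, a general $Y\in|\Ii_E(d)|$ is a smooth degree-$d$ hypersurface containing $E$. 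That $E$ consists of (general) twistor lines is guaranteed by our choice. The main obstacle in the whole argument is verifying irreducibility in part (1) when $\nu(d)>d$, since Theorem~\ref{i1} as stated only gives smoothness for $k\le d$; for the larger value of $k$ one must separately argue that $|\Ii_E(d)|$ has no fixed component and then invoke Bertini's irreducibility theorem, or equivalently show the rational map defined by the system is not composed with a pencil — this is the one place where a genuine (if standard) argument, rather than a direct appeal to the quoted results, is required. The numerical inequalities $\nu_s(d)\le d$ and $\binom{d+2}{2}\ge d+5$ are routine and I would relegate them to a line of computation.
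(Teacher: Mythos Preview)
Your proposal has a genuine gap in part~(3): the inequality $\nu_s(d)\le d$ is \emph{false} for $d\ge 8$. For instance, $\nu_s(8)=\nu(5)=9>8$ and $\nu_s(10)=\nu(7)=14>10$; asymptotically $\nu_s(d)\sim d^2/6$, which outstrips $d$. So Theorem~\ref{i1} applies directly only for $4\le d\le 7$, and the heart of the proof of~(3) is precisely the range $d\ge 8$. The paper handles this by an inductive argument on $d$ in steps of two: one fixes a smooth $j$-invariant quadric $T$ whose ruling $|\Oo_T(1,0)|$ consists of twistor lines, places $\nu_s(d)-\nu_s(d-2)$ of the lines on $T$, and combines a residual exact sequence with respect to $T$ with the inductive hypothesis and a base-locus lemma on $T$ (Lemma~\ref{b5}) to show that $|\Ii_E(d)|$ has no base points outside $E$. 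This degeneration to a quadric is the missing idea; without it there is no way to force smoothness once $k>d$.

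Two smaller points. For part~(2) you have misread the statement: ``$\mathrm{Sing}(Y)$ finite'' includes $\mathrm{Sing}(Y)=\emptyset$, so there is no need to impose a fat point $2p$. The paper simply takes a general $Y\in|\Ii_E(d)|$ with $|E|=\nu_n(d)$ and proves two claims --- that $Y$ is smooth along $E$ (via $h^1(\Ii_{2q\cup E}(d))=0$ for $q\in E$, using the cohomological lemmas you allude to) and that $\mathrm{Sing}(Y)$ is finite (via Bertini plus a base-locus computation using Lemma~\ref{b4}). Your hand-wave ``by the usual generic-smoothness argument'' is exactly where the work lies. For part~(1), your instinct to split the lines among components is correct, but the actual argument (Proposition~\ref{b5.1}) is a clean numerical one: reducedness comes from $h^0(\Ii_E(d-1))=0$, and irreducibility from showing $\nu(y)+\nu(d-y)<\nu(d)$ for $1\le y\le d-1$ when $d\ge 5$, using the explicit formula~\eqref{eqnu}; no Bertini-for-irreducibility or ``no fixed component'' argument is needed.
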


The three cases are proven separately. 
Firstly, we prove the existence of a complex projective space of dimension $\binom{d+3}{3} -\nu (d)(d+1)-1$ 
parametrizing a family of integral degree $d$ projective surfaces, all of them containing a general (but fixed) union 
$E\subset \CC\PP^3$ of $\nu(d)$ general twistor lines, namely $|\Ii_{E}(d)|$  (see also Corollary~\ref{b3} and Proposition~\ref{b5.1}).
Then, in the following step we prove that, reducing the number of lines from $\nu(d)$ to $\nu_{n}(d)$, then the 
general element $Y$ of $|\Ii_{E}(d)|$ is such that
$Sing(Y)$ is finite and does not intersect $E$.
 Afterwards, in the last step we prove that, if the number of twistor lines is reduced to $\nu_{s}(d)$, then the general
 element of $|\Ii_{E}(d)|$ is smooth.
 
Thanks to the Density Lemma, Theorem~\ref{i2} is true
for  twistor lines $L_1,\dots ,L_k$ belonging to  a  non-empty open subset of $\Lambda ^k$.

In  case (2) of Theorem~\ref{i2} we obviously allow the case $\mathrm{Sing}(Y) =\emptyset$. Since any surface $Y\subset \CC\PP^3$ is a locally
complete intersection, $\mathrm{Sing}(Y)$ is finite if and only if $Y$ is normal (\cite[II.8.22A]{h}).

The previous theorem can be specialized as follows.

\begin{theorem}\label{i2.1}
Fix integers $d \ge 10$ and $k$ such that $0\le k \le \nu_j (d)$. Then there is a smooth degree $d$ surface $Y\subset \CC\PP^3$
containing
$k$ general twistor lines and no other no other line L not intersecting any of the k twistor lines.
\end{theorem}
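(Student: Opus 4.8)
The plan is the following. By the Density Lemma~\ref{b2} it suffices to prove the statement for a \emph{general} union $E=L_1\cup\dots\cup L_k$ of $k$ pairwise disjoint lines of $\CC\PP^3$, that is, to show that a general $Y\in|\Ii_E(d)|$ is a smooth degree $d$ surface containing $E$ and containing no line $L$ with $L\cap E=\emptyset$: the only property of $E$ really used below is a Castelnuovo--Mumford regularity bound on $\Ii_E$, which is an open condition on the configuration, so by Density Lemma~\ref{b2} it holds for $k$-tuples of twistor lines in a non-empty open subset of $\Lambda^k$ (distinct twistor lines being automatically disjoint). For $k\le d$ one may instead quote Theorem~\ref{i1} directly; the argument below covers all $k\le\nu_j(d)$ at once.

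The bound $\nu_j(d)=\nu_n(d-8)=\nu(d-9)$ is chosen precisely so that, for $E$ a general union of $k\le\nu_j(d)$ disjoint lines, $h^1(\Ii_E(d-9))=0$; this is the point at which $k\le\nu(d-9)$ enters essentially, via the maximal rank behaviour of a general union of disjoint lines in $\CC\PP^3$ (Remark~\ref{a1} records only a weaker, sometimes sufficient, vanishing; the sharp estimate is among the cohomological lemmas of Section~\ref{s2}). Since also $h^2(\Ii_E(d-10))=h^1(\Oo_E(d-10))=0$ and $h^3(\Ii_E(d-11))=h^2(\Oo_{\CC\PP^3}(d-11))=0$, the sheaf $\Ii_E$ is $(d-8)$-regular; as $d-8\ge 2$ this makes $\Ii_E(d-8)$ globally generated, $H^0(\Ii_E(d-8))\ne 0$ and $h^1(\Ii_E(d))=0$, so $|\Ii_E(d)|$ is a non-empty linear system of the expected dimension with base locus exactly $E$.

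Smoothness of the general $Y\in|\Ii_E(d)|$ now follows as usual: off $E$ the system is base-point free, hence a general member is smooth there by Bertini (characteristic $0$); along $E$, global generation of $\Ii_E(d-8)$ implies that the sections of $\Ii_E(d)$ separate the two normal directions to $E$ at each point, so a general member is smooth along $E$ as well.

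Finally, to exclude lines disjoint from $E$ consider the incidence variety
\[
I:=\{(L,Y)\in Gr(2,4)\times|\Ii_E(d)|\ :\ L\cap E=\emptyset,\ L\subseteq Y\},
\]
with its two projections; it suffices to show $\dim I<\dim|\Ii_E(d)|$, for then the projection to $|\Ii_E(d)|$ is not dominant. The crucial estimate is that every line $L$ with $L\cap E=\emptyset$ imposes at least $9$ independent conditions on $|\Ii_E(d)|$: pick $s\in H^0(\Ii_E(d-8))$ not vanishing identically on $L$ (possible since $\Ii_E(d-8)$ is globally generated and $L\cap E=\emptyset$); then $s\cdot H^0(\Oo_{\CC\PP^3}(8))\subseteq H^0(\Ii_E(d))$ restricts on $L\cong\CC\PP^1$ to $s|_L\cdot H^0(\Oo_L(8))$, which is $9$-dimensional in $H^0(\Oo_L(d))$ because multiplication by the non-zero section $s|_L$ is injective, so $H^0(\Ii_E(d))\to H^0(\Oo_L(d))$ has rank $\ge 9$. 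Hence, for any irreducible component $I'$ of $I$, letting $Z\subseteq Gr(2,4)$ be the closure of its image under the first projection and $L$ a general point of $Z$, the fibre of $I'$ over $L$ is contained in $|\Ii_{E\cup L}(d)|$, of dimension $\le\dim|\Ii_E(d)|-9$; therefore $\dim I'\le\dim Z+\dim|\Ii_E(d)|-9\le 4+\dim|\Ii_E(d)|-9<\dim|\Ii_E(d)|$. This proves the theorem, once combined with the first paragraph. The one genuinely hard step is the cohomological input of the second paragraph — proving $h^1(\Ii_E(d-9))=0$ (equivalently the $(d-8)$-regularity of $\Ii_E$) for $k$ as large as $\nu(d-9)$, and checking it persists for twistor configurations via Density Lemma~\ref{b2}; granting that, everything else is a formal dimension count.
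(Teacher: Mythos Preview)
Your proof is correct and takes a genuinely different route from the paper's.

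The paper obtains smoothness by invoking Theorem~\ref{i2} case~(3) (whose proof is an inductive argument using $j$-invariant quadrics), and handles the exclusion of extra lines by an explicit construction: four carefully chosen smooth quadrics $Q_1,\dots,Q_4$ with empty common intersection are fixed, four residual exact sequences with respect to these quadrics are chained together, and one shows that every line $L$ disjoint from $E$ satisfies $a(L)\ge 4$ (with a separate argument for lines lying on some $Q_j$, and a further refinement using ten quadrics to handle lines whose chosen points lie on the wrong quadric). This gives $\omega(i)=\emptyset$ for $i\le 3$, which together with $\dim Gr(2,4)=4$ is enough.

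You instead organise everything around a single observation: since $k\le \nu(d-9)$, the Hartshorne--Hirschowitz maximal rank theorem (this is the content behind Corollary~\ref{b3}, not a lemma of Section~\ref{s2} as you write) gives $h^1(\Ii_E(d-9))=0$, and the easy higher vanishings then make $\Ii_E$ Castelnuovo--Mumford $(d-8)$-regular. Global generation of $\Ii_E(d-8)$ buys you three things at once: the base locus of $|\Ii_E(d)|$ is exactly $E$; the surjection onto the $2$-dimensional fibre $\Ii_{E,q}/\mathfrak m_q\Ii_{E,q}$ at each $q\in E$ forces smoothness along $E$ for the general member; and, most elegantly, for any line $L$ disjoint from $E$ one finds a section $s\in H^0(\Ii_E(d-8))$ with $s|_L\ne 0$, so that $s\cdot H^0(\Oo_{\CC\PP^3}(8))\subset H^0(\Ii_E(d))$ restricts to a $9$-dimensional space on $L$, giving $a(L)\ge 9$. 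The incidence count $4-9<0$ then finishes immediately, with no case analysis.

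Your approach is more conceptual and avoids the quadric bookkeeping entirely; it also yields the stronger bound $a(L)\ge 9$ uniformly. The paper's method is more elementary in that it never names regularity and works directly with residual sequences, and its estimate $a(L)\ge 4$ is exactly tailored to $\dim Gr(2,4)=4$. Both rely on the same hard input (maximal rank for general unions of lines, transferred to twistor configurations via the Density Lemma); your version makes this dependence more transparent.
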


%A general analysis on the maximal number of lines contained on a smooth degree $d>2$ surface
%was given in~\cite{bs}. Of course [ does ``of course '' refer to the fact that twistor lines are skew?], since we deal with twistor lines, our result $\nu_{j} (d)$
%is lower than $3d^{2}$ (and also lower than other residual cases), given in~\cite{bs}.

A surface $Y$ in $\CC\PP^{3}$ is said to be $j$-invariant, if $j(Y) = Y$. This concept
will be better explained at the end of Section~\ref{S3}. Examples of $j$-invariant surfaces are
the so-called \textit{real quadrics} in~\cite{sv1} and the smooth cubic analyzed in~\cite{APS,sv2}.
With the help of some observation on the topology of $j$-invariant surfaces in $|\Ii_{E}(d)|$
we are able to improve Theorem~\ref{i2.1} as follows.

\begin{theorem}\label{i3}  
Take $d$, $\nu_j (d)$ and $k\le \nu_j (d)$ as in Theorem~\ref{i2.1} with $d$ even; if $d\equiv 2\pmod{4}$ assume $k$ odd; if $d\equiv 0\pmod{4}$ assume $k$ even. Then there exists a degree $d$ smooth $j$-invariant surface $Y\subset \CC\PP^3$ such that $Y$ contains exactly $k$ twistor lines.
\end{theorem}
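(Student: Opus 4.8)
The plan is to present the $j$-invariant degree $d$ surfaces as the real locus of a real structure on the linear system and then to run, inside that real locus, the kind of dimension counts behind Theorems~\ref{i1}--\ref{i2.1}. Since $j$ is quaternionic left multiplication by $j$, its linear lift $J\colon\CC^4\to\CC^4$ satisfies $J^2=-\mathrm{id}$; hence the conjugate-linear involution $\tau$ it induces on $H^0(\CC\PP^3,\Oo(d))$ satisfies $\tau^2=(-1)^d\,\mathrm{id}$, which equals $\mathrm{id}$ exactly because $d$ is even. Thus $\tau$ is a genuine real structure: up to rescaling its equation a degree $d$ surface is $j$-invariant iff it is $\tau$-real, so the $j$-invariant degree $d$ surfaces form an $\RR\PP^{\binom{d+3}{3}-1}\subset|\Oo(d)|$. (For $d$ odd one has $\tau^2=-\mathrm{id}$, no $\tau$-fixed line, and no $j$-invariant surface of that degree, which is why $d$ is even here.) Now fix $L_1,\dots,L_k$ in a non-empty open subset of $\Lambda^k$ as allowed by the Density Lemma~\ref{b2} and put $E=L_1\cup\cdots\cup L_k$. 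Each $L_i$ is $j$-invariant, so $j(E)=E$ and $\tau$ preserves $H^0(\Ii_E(d))$; the $j$-invariant members of $|\Ii_E(d)|$ thus form a real linear subspace $\Pi\cong\RR\PP^{N}$, $N=\binom{d+3}{3}-k(d+1)-1\ge0$ (as $k\le\nu_j(d)\le\nu(d)$), which is Zariski dense in $|\Ii_E(d)|$ because $\tau$ is a real structure.

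It then suffices to produce $Y\in\Pi$ that is smooth and contains no line disjoint from $E$: since two distinct twistor lines are disjoint (being distinct fibres of $\pi$), such a $Y$ contains exactly the twistor lines $L_1,\dots,L_k$. A general $Y\in\Pi$ is smooth, for by Theorem~\ref{i2}(3) (or~\ref{i2.1}) the discriminant meets $|\Ii_E(d)|$ in a proper Zariski-closed subset, which cannot contain the Zariski-dense $\Pi$. For twistor lines outside $E$ I would transport the incidence-variety estimate of Theorem~\ref{i1} to $\Pi$: in $\{(Y,M):Y\in\Pi,\ M\in\Lambda\setminus\{L_1,\dots,L_k\},\ M\subset Y\}$ the restriction of $\tau$ to $H^0(\Oo_M(d))\cong\CC^{d+1}$ again squares to $+1$, so whenever $E\cup M$ is $d$-regular (Remark~\ref{a1}) the condition $M\subset Y$ imposes the expected $d+1$ real conditions, the fibre over $M$ is $\RR\PP^{N-d-1}$, the incidence variety has real dimension $\le 4+(N-d-1)<N$, and its image in $\Pi$ is a proper subset. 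The twistor lines $M$ for which $E\cup M$ fails to be $d$-regular — these occur only when $k$ is close to $\nu_j(d)$ — are exactly the case that must be controlled by the auxiliary cohomological lemmas behind Theorem~\ref{i1}, and this is what the slack in $\nu_j(d)=\nu(d-9)$ is for.

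The ingredient I expect to be the real obstacle, and where the congruences on $d$ and $k$ enter, is the ``observation on the topology of $j$-invariant surfaces in $|\Ii_E(d)|$''. The number of twistor lines of a smooth member of $\Pi$ can only change in even steps as $Y$ moves in the connected family $\Pi$ — a $j$-conjugate pair of points of the finite Fano scheme of lines can merge onto, or split off from, a pair of twistor lines on the real quadric $Gr(2,4)$ — so this count has a residue modulo $2$ that is locally constant on the smooth, reduced-Fano locus of $\Pi$; the content of the observation is to identify this residue and to show that it matches $k$ modulo $2$ precisely under the stated congruences on $d$ and $k$ (this is where $d\bmod 4$ enters, via evaluation on one explicit $j$-invariant model of degree $d$). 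Granting this, the construction closes up exactly for $k$ of the prescribed parity: a general $Y\in\Pi$ is then smooth with no twistor line beyond $L_1,\dots,L_k$, hence with exactly $k$, while for the opposite parity no smooth $j$-invariant surface through $E$ can have exactly $k$ twistor lines, so the hypothesis is also necessary. Making this residue-mod-$2$ argument rigorous — its invariance across the singular and excess members of $\Pi$, and its value on a model — together with the $d$-regularity bookkeeping of the previous paragraph, is the heart of the proof; the final passage from fixed $(L_1,\dots,L_k)$ to a non-empty open subset of $\Lambda^k$ is then routine, exactly as in Theorem~\ref{i2}.
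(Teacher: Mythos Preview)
Your first two paragraphs already contain a complete proof, and your route to Zariski density is in fact cleaner than the paper's. Since $d$ is even, $\tau^2=\mathrm{id}$ on $H^0(\Oo_{\CC\PP^3}(d))$ and hence on the $\tau$-stable subspace $H^0(\Ii_E(d))$; this is a genuine real structure on a complex vector space, so its fixed points are a real form and $\Pi\cong\RR\PP^N$ is automatically non-empty and Zariski dense in $|\Ii_E(d)|$ --- for \emph{every} $k\le\nu_j(d)$, with no parity condition needed. Then one simply quotes the proof of Theorem~\ref{i2.1}: the locus of $Y\in|\Ii_E(d)|$ that are singular or contain some line disjoint from $E$ sits in a proper Zariski-closed subset, which cannot contain the Zariski-dense $\Pi$, so a general $Y\in\Pi$ is smooth, $j$-invariant, and carries exactly the $k$ prescribed twistor lines. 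This is precisely the paper's final step. Your real incidence-variety computation and the worry about $d$-regularity of $E\cup M$ are unnecessary detours: the complex result plus Zariski density of $\Pi$ already does the job.

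Your third paragraph, however, is entirely misguided. The congruences on $(d,k)$ do \emph{not} encode a mod-$2$ topological invariant of the Fano scheme of lines, and they are not necessary for the conclusion --- your own first paragraph proves the theorem without them. In the paper they appear only because of how the authors establish Zariski density: rather than invoking $\tau^2=\mathrm{id}$, they show that the anti-holomorphic involution on $|\Ii_E(d)|$ has at least one fixed point by adjoining a $j$-symmetric set $S\cup j(S)$ of $2m$ general points to cut $|\Ii_E(d)|$ down to a single surface, which is then forced to be $j$-invariant. This trick requires $h^0(\Ii_E(d))=2m+1$ to be odd, i.e.\ $\dim|\Ii_E(d)|$ even, and an elementary parity count shows this holds exactly under the stated congruences on $d$ and $k$. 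So the hypotheses are an artefact of the paper's particular argument, not a genuine obstruction; your assertion that ``for the opposite parity no smooth $j$-invariant surface through $E$ can have exactly $k$ twistor lines'' is false, and the speculation about conjugate pairs merging on $Gr(2,4)$ should be dropped.
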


\begin{remark}
General results on the maximum number $N_{d}$ of lines on a smooth degree $d$ projective surface are given in~\cite{bs}. In this remark we recall some of them.
It is well known that any smooth cubic surface contains exactly 27 lines and, from the work of Segre~\cite{segre}, that $N_{4}=64$ and $N_{d}\leq (d-2)(11d-6)$. More recently it was proved in~\cite{chm} that $N_{d}\ge 3d^{2}$. Moreover,
thanks to~\cite{bs,chm}, we have $N_{6}\ge 180$, $N_{8}\ge 352$ $N_{12}\ge 864$ and $N_{20}\ge 1600$.

Since twistor lines are skew, it is interesting to look at the maximum number $S_{d}$ of skew lines contained in a smooth degree $d$ projective surface. When $d=3$
this number is exactly $S_{3}=6$. Thanks to~\cite{nikulin}, $S_{4}=16$. If $d\ge 4$, $S_{d}\le 2d(d-2)$ \cite{miyaoka}.
Concerning lower bounds, $S_{d}\ge d(d-2)+2$ ~\cite{rams}, but if $d\ge 7$ is an odd number then~\cite{bs}
$S_{d}\ge d(d-2)+4$.

In the particular case of twistor lines these number can be improved as follows: any cubic surface contains at most
5 twistor lines~\cite{APS}, while it was observed in~\cite[arXiv version v1]{sv1} that a smooth degree $d$ projective surface contains at most $d^{2}$ twistor lines and that there exists a quartic containing exactly 8 twistor lines. 
In Theorem~\ref{i2} case (3) we give a first general lower bound, for $d\ge 4$, on the maximal number of twistor lines 
$T_{d}$ lying on a smooth degree $d$ surface, obtaining $d^{2}/6\simeq\nu_{j}(d)\le T_{d}\le d^{2}$.
However, this bound is not optimal due to the quartic with 8 twistor lines in~\cite[arXiv version v1]{sv1}.
\end{remark}
In the next remark we give a possible interpretation in terms of OCS's of our results.

\begin{remark}
Thanks to Theorems~\ref{i2.1} and~\ref{i3}, for any general set of $k$ points 
$P=\{p_{1},\dots, p_{k}\}\subset\mathbb{S}^{4}$, there is an infinite number of conformally inequivalent OCS's,
induced by different smooth surfaces in $\mathbb{CP}^{3}$, that are singular at $P$ and cannot be extended at any point of 
$P$. Moreover, for any fixed degree $d\geq 10$, there are at least $\nu_{j}(d)$ different conformal classes of surfaces
of degree $d$. Even if this fact was already known, our results give new
information on the minimum degree $d_{\min}$ of a smooth surface $\mathcal{S}\subset\CC\PP^{3}$, such
that it contains $\{\pi^{-1}(p_{1}),\dots,\pi^{-1}(p_{k})\}$ and no other twistor line.
%In fact the number $\min\{d\,|\,\nu_{j}(d)\geq k\}$ can be seen as an upper bound for $d_{\min}$. 
\end{remark}

\section{Preliminaries and Proof of Theorem~\ref{i1}}\label{s2}

In this section, by means of some technical lemma, we prove Theorem~\ref{i1}. In this part of 
the paper the geometry of the twistor projection is not involved and all the ingredients come from
basic geometric constructions (we use as main references the books~\cite{GH, h}). We recall the main tools.

For each closed subscheme
$A\subset \CC\PP^3$ let $\Ii _A \subseteq \Oo _{\CC\PP^3}$ denote its \textit{ideal sheaf}, i.e.:
$\Ii_{A}$ is defined to be the kernel of the morphism $\Oo_{\CC\PP^{3}}\to i_{*}\Oo_{A}$, $i$ being the inclusion map
(see \cite[p. 115]{h}).

If $A\subset F$, let $\Ii _{A,F} \subseteq \Oo _F$ be the ideal sheaf of $A$ in $F$.
For any closed subscheme $A\subset \CC\PP^3$ we denote by $\mathrm{Res}_F(A)$ the residual scheme of $A$ with respect to $F$, i.e. the closed subscheme
of $\CC\PP^3$ with the conductor $\Ii _A:\Ii _F$ as its ideal sheaf. We always have $\Res _F(A) \subseteq A$ and if $A$ is a reduced algebraic set, then $\Res _F(A)$ is the
closure in $\CC\PP^3$ of $A\setminus (A\cap F)$, i.e. the union of the irreducible components of $A$ not contained in $F$. Assuming $\deg (F)=f$, for any $t\in \ZZ$ we have a natural exact sequence of coherent sheaves on $\CC\PP^3$:
\begin{equation}\label{eqa1}
0 \to \Ii _{\mathrm{Res} _F(A)}(t-f) \to \Ii _A(t) \to \Ii _{A\cap F,F}(t) \to 0.
\end{equation}

For any scheme $A\subset F$ and every curve $C\subset F$ let $\mathrm{Res}_{C,F}(A)$ be the closed subscheme of $F$ with $\Ii
_{A,F}:\Ii _{C,F}$ as its ideal sheaf. If $F$ is smooth, then $C$ is an effective Cartier divisor of $F$. We get the following
residual exact sequence of coherent sheaves on $F$:
\begin{equation}\label{eqa2}
0 \to  \Ii _{\mathrm{Res}_{C,F}(A)}(t)(-C) \to \Ii _{A,F}(t) \to \Ii _{A\cap C,C}(t)\to 0.
\end{equation}
If $A $ is the disjoint union of two closed subschemes
of $\CC\PP^3$, say $A =A_1\sqcup A_2$ with each $A_i$ closed in $\CC\PP^3$, then $\Res _F(A) =\Res _F(A_1)\cup \Res _F(A_2)$ and $F\cap A =(F\cap A_1) \cup (F\cap A_2)$. Since $\Res _F(A_i)\subseteq A_i$,
we have $\Res _F(A_1)\cap \Res _F(A_2) =\emptyset$.

When dealing with singularities it is useful to exploit the so-called \textit{first infinitesimal neighborhood
of a point} (also called \textit{fat point}). We recall some of its features.
For any $q\in \CC\PP^3$ let $2q$ be the first infinitesimal neighborhood of $q$ in $\CC\PP^3$, i.e. the closed subscheme of $\CC\PP^3$ with $(\Ii _q)^2$ as its ideal sheaf. The scheme $2q$ is a zero-dimensional scheme with reduced scheme $(2q) _{\mathrm{red}} = \{q\}$ and if $q$ is contained in an affine or projective $n$-dimensional space, then $\deg(2q)=\binom{2+n-1}{n}$ (and hence, for $n=3,2,1$, we have $\deg(2q)=4,3,2$, respectively). If $q\notin F$, we have
$2q\cap F =\emptyset$ and $\mathrm{Res} _T(2q) = 2q$.
If $q\in F$ let $(2q,F)$ be the closed subscheme of $F$ with $(\Ii _{q,F})^2$ as its ideal sheaf. We have 
that $(2q,F) = 2q\cap F$ (scheme-theoretic intersection) and the scheme $(2q,F)$
is a zero-dimensional subscheme of $F$ with $(2q,F)_{\mathrm{red}} = \{q\}$ and if $F$ is smooth at $q$ then $\deg ((2q,F)) =3$ and $\mathrm{Res}_F(2q) = \{q\}$.

\begin{remark}\label{r2.1}
Take now as $F$ a plane $H\subset \CC\PP^3$ such that $q\in H$ and as $C$ a line $L\subset H$ such that $q\in L$ (and so with $f=1$). 
Hence $\deg ((2q,H)\cap L) =2$ and so $\Ii _{(2q,H)\cap L,L}(t)$ is a line
bundle on $L\cong \CC\PP^1$ with degree $t-2$. 
The cohomology of line bundles on $\CC\PP^1$ gives $h^1(L,\Ii _{(2q,H)\cap L,L}(t))=0$
for all $t> 0$. We have $\mathrm{Res}_{L,H}((2q,H)) =\{q\}$ and hence, recalling that $\Oo _H(t)(-L) = \Oo _H(t-1)$, 
the exact sequence~\eqref{eqa2}, can be written as,
\begin{equation}\label{eqabis}
0\to \Ii _{q,H}(t-1)\to \Ii _{(2q,H),H}(t)\to \Ii _{(2q,H)\cap L,L}(t)\to 0.
\end{equation}
We have $h^1(H,\Ii _{q,H}(x))=0$ for all $x\ge 0$ (see~\cite[Corollary 3.13 pag 150]{miranda}). 
Hence the long cohomology
exact sequence
of~\eqref{eqabis} gives $h^1(H,\Ii _{(2q,H),H}(t)) =0$ for all $t> 0$.

 Recall now that $\mathrm{Res}_H(2q) = \{q\}$ and $h^1(\Ii _q(t)) =0$ for all $t\ge 0$. Thus the exact sequence~\eqref{eqa1} for $F=H$ and $f=1$ 
 can be written as
 \begin{equation*}
0 \to \Ii _q(t-1) \to \Ii _{2q}(t) \to \Ii _{(2q,H),H}(t) \to 0
\end{equation*}
 and gives
 $h^1(\Ii _{2q}(t)) =0$ and $h^0(\Ii _{2q}(t)) = h^0(\Oo _{\CC\PP^3}(t)) -\deg (2q) = \binom{t+3}{3} -4$ for all $t>0$. The projective
 space $|\Ii _{2q}(t)|$ parametrizes all degree $t$ surfaces passing through $q$ and singular at $q$ (e.g. $|\Ii _{2q}(1)| =\emptyset$ and $|\Ii _{2q}(2)|$ is the set
 of all quadric cones with vertex containing $q$).
\end{remark}

The proof of Theorem~\ref{i1} relies on the computation of the first two cohomology numbers of 
$\Ii_{E}(t)$ and the analysis of such numbers in the presence of fat points.
The following results will enable us to perform such analysis.

In the following proposition, given a finite set of points $S$ in a plane $H$, we compute, for any 
$x\geq |S|-2$ the number $h^{1}(H,\Ii_{S,H}(x))$. It turns out that this is zero for $x\geq |S|-1$ while,
for $x=|S|-2$ is equal to 1 if and only if $S$ is contained in a line.

\begin{proposition}\label{a2}
Let $H\subset \CC\PP^3$ be a plane and $S\subset H$ a finite set. Set $s:= |S|$. 
\begin{enumerate}
\item We have $h^1(H,\Ii _{S,H}(t)) = 0$ for all $t\ge s-1$.
\item If $S\ne\emptyset$ and $S$ is not contained in a line, then $h^1(H,\Ii _{S,H}(s-2))=0$.
\item If $S\ne\emptyset$ and there is a line $L\subset H$ such that $S\subset L$, then $h^1(H,\Ii _{S,H}(s-2))=1$.
\end{enumerate}
\end{proposition}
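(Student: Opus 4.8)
The plan is to reduce the three statements to the cohomology of line bundles on $H\cong\PP^2$ and on lines $L\cong\PP^1$. From the structure sequence $0\to\Ii_{S,H}(t)\to\Oo_H(t)\to\Oo_S(t)\to0$ and the standard vanishing $h^1(H,\Oo_H(t))=h^1(\PP^2,\Oo(t))=0$ for every $t\in\ZZ$, the long exact cohomology sequence gives $h^1(H,\Ii_{S,H}(t))=\dim\operatorname{coker}\big(H^0(\Oo_H(t))\xrightarrow{\rho_t}H^0(\Oo_S(t))\big)$, where $H^0(\Oo_S(t))\cong\CC^{s}$ and $\rho_t$ is evaluation at the $s$ points; so $h^1(H,\Ii_{S,H}(t))$ measures the failure of $S$ to impose independent conditions on the degree $t$ curves of $H$. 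Thus for (1) it is enough to prove $\rho_t$ surjective, while for (2)--(3) it is cleanest to compute the relevant $h^1$ through a residuation sequence.

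For (1) I would show $\rho_t$ is onto for every $t\ge s-1$ by exhibiting, for each $p\in S$, a section whose image is a nonzero multiple of the standard basis vector indexed by $p$. Write $S\setminus\{p\}=\{q_1,\dots,q_{s-1}\}$; for each $i$ choose a line $M_i\subset H$ with $q_i\in M_i$ and $p\notin M_i$ (any line through $q_i$ other than $\overline{pq_i}$ works), choose one further line $N\subset H$ with $p\notin N$, and let $\ell_{M_i},\ell_N$ be defining linear forms. Then $\ell_{M_1}\cdots\ell_{M_{s-1}}\,\ell_N^{t-s+1}$ is a degree $t$ form vanishing at every $q_i$ but not at $p$; letting $p$ vary over $S$ gives surjectivity of $\rho_t$, hence $h^1(H,\Ii_{S,H}(t))=0$ for $t\ge s-1$. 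The hypothesis enters only through the exponent $t-s+1\ge0$.

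For (2) and (3) I would residuate along a line, applying the exact sequence \eqref{eqa2} with $F=H$ (smooth) and $C=L$ a line of $H$. Setting $k:=|S\cap L|$ and using $\Oo_H(t)(-L)=\Oo_H(t-1)$, $\Ii_{S\cap L,L}(t)=\Oo_{\PP^1}(t-k)$, and that $\Res_{L,H}(S)$ consists of the $s-k$ points of $S$ off $L$, \eqref{eqa2} becomes
\[
0\to\Ii_{\Res_{L,H}(S),H}(t-1)\to\Ii_{S,H}(t)\to\Oo_{\PP^1}(t-k)\to0 .
\]
In case (2) the hypotheses force $s\ge3$ (a set of at most two points always lies on a line), and I would take $L$ through two of the points of $S$, so that $2\le k\le s-1$. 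Evaluating at $t=s-2$: then $h^1(\PP^1,\Oo(s-2-k))=0$ since $k\le s-1$, and $h^1(H,\Ii_{\Res_{L,H}(S),H}(s-3))=0$ by part (1) since $s-3\ge(s-k)-1$ (equivalently $k\ge2$); the long exact sequence then forces $h^1(H,\Ii_{S,H}(s-2))=0$. In case (3) one has $S\subset L$, so $\Res_{L,H}(S)=\emptyset$, and at $t=s-2$ the sequence reads $0\to\Oo_H(s-3)\to\Ii_{S,H}(s-2)\to\Oo_{\PP^1}(-2)\to0$; since $h^1(\PP^2,\Oo(s-3))=h^2(\PP^2,\Oo(s-3))=0$ (standard, as $s\ge1$) while $h^1(\PP^1,\Oo(-2))=1$, the long exact sequence identifies $H^1(H,\Ii_{S,H}(s-2))$ with $H^1(\PP^1,\Oo(-2))$, so the value is $1$.

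None of these computations is hard in itself; the only spot asking for a little thought is the choice of auxiliary line in (2). The union-of-lines trick used for (1) only reaches degree $s-1$, and it is exactly the residuation onto a line carrying at least two points of $S$ — available precisely because $S$ is not collinear — that buys the extra unit of degree and forces $k\ge2$. I expect that bookkeeping, together with the degenerate cases ($s=3$ in (2), and small $s$ in (3)), to be the only subtlety.
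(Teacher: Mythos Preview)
Your proof is correct. The treatment of part (3) is essentially identical to the paper's (the same residual sequence \eqref{eqa6}), but for (1) and (2) you take a genuinely different, and somewhat cleaner, route.

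The paper proves (1) and (2) together by induction on $s$: it picks a point $p\in S$, takes a \emph{general} line through $p$ (so $L\cap S=\{p\}$), and applies the inductive hypothesis to $A=S\setminus\{p\}$; this forces a case split according to whether $A$ is itself collinear, handled by a second residuation along the line $R\supset A$. You instead dispose of (1) by the elementary ``product of lines'' construction (no induction), and then for (2) you residuate along a line chosen through \emph{two} points of $S$, so that $k\ge 2$ and part (1) applies directly to the residual set of $s-k$ points in degree $s-3$. This single well-chosen residuation replaces the paper's induction and its sub-case analysis; the price is that (1) must be established separately, but your direct argument there is short. Both approaches ultimately rest on the same exact sequence \eqref{eqa2} and the cohomology of $\Oo_{\PP^1}$ and $\Oo_{\PP^2}$.
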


\begin{proof}
Note that if $s\le 2$, then the set $S$ is always contained in a line (unique if and only if $s=2$), while for $s\ge 3$ the set $S$
is contained in at most one line. 

The proposition is true if
$s=0$, because
$h^1(H,\Oo _{H}(t))=0$ for all
$t$. It is also true for $s=1$, because $h^1(H,\Ii _{p,H}(t)) =0$ and $h^1(H,\Ii _{p,H}(-1)) =s=1$ for all $p\in H$ and all $t\ge 0$. 
Thus we may assume $s\ge 2$ and use induction on the integer $s$.

%\quad \emph{Claim:} If $s>0$ and $S$ is contained in a line $L$, then $h^1(H,\Ii _{S,H}(s-2))=1$.

%\quad \emph{Proof of Claim:} Since
Assume that there is a line $L$ such that $S\subset L$. We have an exact sequence
\begin{equation}\label{eqa6}
0 \to \Oo _H(s-3)\to \Ii _{S,H}(s-2)\to \Ii _{S,L}(s-2)\to 0.
\end{equation}
Since $|S|=s$ the sheaf $\Ii _{S,L}(s-2)$ is the degree $-2$ line bundle on $L\cong \CC\PP^1$ and so $h^1(H,\Ii _{S,L}(s-2))=1$
(see~\cite[Corollary 3.13 pag 150]{miranda}). We have $h^1(H,\Oo _H(s-3))=0$ and, since $s=|S|>0$, then 
$h^{1}(L,\Ii _{S,L}(s-2))=1$ and
$h^2(H,\Oo _H(s-3)) =0$. To have that $h^1(H,\Ii _{S,H}(s-2))=1$, use the long cohomology exact sequence
of~\eqref{eqa6}.

We now pass to the case in which $S$ is not contained in any line.
Assume $s\geq3$. Fix $p\in S$ and set $A:= S\setminus \{p\}$. Fix a general line $L\subset H$
such that $p\in L$. We have $L\cap S=\{p\}$. Consider the residual exact sequence of $L$ in $H$
\begin{equation}\label{eqa4}
0\to \Ii _{A,H}(t-1)\to \Ii _{S,H}(t) \to \Ii _{p,L}(t)\to 0.
\end{equation}
Since $\Ii _{p,L}(t)$ is the degree $t-1$ line bundle on $L\cong \CC\PP^1$, we have $h^1(L,\Ii _{p,L}(x))=0$ for all
$x>0$. Hence, if $t>0$ and $h^1(\Ii _{A,H}(t-1)) =0$, then the cohomology exact sequence of~\eqref{eqa4} gives $h^1(\Ii _{S,H}(t)) =0$.
By the inductive assumption this is always the case if either $t\ge s-1$ or $t=s-2$ and $A$ is not contained in a line.
Therefore we may assume that $A$ is contained in a line $R$. We have $\{p\} \in S\setminus S\cap
R$ and hence we have a residual exact sequence
\begin{equation}\label{eqa7}
0 \to \Ii _{p,H}(s-3)\to \Ii _{S,H}(s-2)\to \Ii _{A,R}(s-2)\to 0.
\end{equation}
Since $s-3\ge 0$, we have $h^1(H,\Ii _{p,H}(s-3)) =0$. Since $\Ii _{A,R}(s-2)$ is the degree $-1$ line bundle on $R\cong
\CC\PP^1$, we have $h^1(R,\Ii _{A,R}(s-2))=0$ (see~\cite[Example IV.1.3.5]{h} or the Riemann-Roch formula~\cite[page 245]{GH} or~\cite[IV.1.3]{h}). 
To conclude the proof it is sufficient to use the
 long cohomology exact sequence of~\eqref{eqa7}.\end{proof}

%In the following lemma we compute the first two cohomology numbers of $\Ii _E(t)$ for a general set of lines $E$. This will be useful in the 
%proof of Theorem~\ref{i1} to prove that a general $Y\in\Ii _{E}(d)$ contains no line $L$ with $L\cap E=\emptyset$.

%\begin{proof}
%The lemma is true if $s=0$, because $h^1(\Oo _{\CC\PP^3}(x)) = 0$ for all $x\in \ZZ$ and $h^0(\Oo _{\CC\PP^3}(y)) =\binom{y+3}{3}$ for
%all $y\ge -1$. 
%For $s=1$ see for instance~\cite[Example 1.1]{brevik}.
%
%Thus we may assume $s>0$ and use induction on $s$ to prove the first assumption of the lemma. Fix a line
%$L\subseteq E$ and set
%$B:= E\setminus L$. By the inductive assumption we have $h^1(\Ii _B(x)) =0$ and $h^0(\Ii _B(x)) =\binom{x+3}{3} -(s-1)(x+1)$
%for all $x\ge s-2$. Fix a general plane $H\subset \CC\PP^3$ containing $L$ and look at the residual exact sequence of $H$
%\begin{equation*}%\label{eqa3}
%0 \to \Ii _B(t-1) \to \Ii _E(t)\to \Ii _{E\cap H,H}(t)\to 0.
%\end{equation*}
%The scheme
%$E\cap H$ is the disjoint union of
%$L$ and the finite set $S:= B\cap H$
%with cardinality $s-1$. We have $h^i(H,\Ii _{E\cap H,H}(t)) =h^i(H,\Ii _S(t-1))$. Thus it is sufficient to check when
%$h^1(H,\Ii _S(t-1)) =0$ and $h^0(H,\Ii _S(t-1)) =\binom{t+1}{2}-s+1$. By Proposition~\ref{a2} this is always true if $t-1\ge s-2$.\end{proof}

The next two results are a consequence of Proposition~\ref{a2} in the case in which a point of $S$ is replaced with its first infinitesimal neighborhood. We need to recall the following remark.

\begin{remark}\label{a5}
Take any surface $F\subset \CC\PP^3$ and any zero-dimensional scheme $Z\subset F$. Since $Z$ is zero-dimensional, we have
$h^i(Z,\Ss )=0$ for every $i>0$ and every coherent sheaf $\Ss$ on $Z$. Thus the exact sequence
$$0 \to \Ii _{Z,F}(t)\to \Oo _F(t)\to \Oo _Z(t)\to 0$$gives $h^2(F,\Ii _{Z,F}(t))=h^2(F,\Oo _F(t))$. Moreover, we have $h^2(\Oo
_F(t))=0$ if and only if $t \ge -\deg (F)$.
\end{remark}

\begin{lemma}\label{a4}
Fix a plane $H\subset \CC\PP^3$, $p\in H$ and a set $S\subset H\setminus \{p\}$. Set $s:= |S|$ and $Z:= (2p,H)\cup S$.
\begin{enumerate}
\item We have $h^1(H,\Ii _{Z,H}(t)) =0$ for all $t\ge s+1$.
\item If $S\cup \{p\}$ is not contained in a line, then $h^1(H,\Ii _{Z,H}(s)) =0$.
\item If there is a line containing $S\cup\{p\}$, then  $h^1(H,\Ii _{Z,H}(s)) =1$.
\end{enumerate}
\end{lemma}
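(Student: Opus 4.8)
The plan is to follow the proof of Proposition~\ref{a2}, replacing the fat point $(2p,H)$ by the reduced point $\{p\}$ via a residual exact sequence~\eqref{eqa2} along a suitable line $C\subset H$ and then invoking Proposition~\ref{a2} for the $(s+1)$-point set $\{p\}\cup S$. As in Proposition~\ref{a2} I read parts (2) and (3) with the tacit hypothesis $S\neq\emptyset$ (in (2) it is automatic, since $S\cup\{p\}$ not lying on a line forces $s\ge 2$); the case $S=\emptyset$ of part (1) is exactly Remark~\ref{r2.1}.

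For parts (1) and (2), fix a general line $L\subset H$ through $p$; since $p\notin S$ and $S$ is finite we may take $L\cap S=\emptyset$. Then $Z\cap L=(2p,H)\cap L$ has length $2$ in $L$, with $\Ii_{(2p,H)\cap L,L}(t)$ the degree $t-2$ line bundle on $L\cong\CC\PP^1$; moreover $\mathrm{Res}_{L,H}((2p,H))=\{p\}$ (Remark~\ref{r2.1}) and $\mathrm{Res}_{L,H}(S)=S$, so $\mathrm{Res}_{L,H}(Z)=\{p\}\cup S$. Hence~\eqref{eqa2}, using $\Oo_H(t)(-L)=\Oo_H(t-1)$, becomes
\[
0\to \Ii_{\{p\}\cup S,H}(t-1)\to \Ii_{Z,H}(t)\to \Ii_{(2p,H)\cap L,L}(t)\to 0 ,
\]
and $h^1(L,\Ii_{(2p,H)\cap L,L}(t))=0$ for every $t\ge 1$. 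By the long cohomology sequence it therefore suffices to check $h^1(H,\Ii_{\{p\}\cup S,H}(t-1))=0$. For (1), when $t\ge s+1$ (hence $t\ge 1$) this is Proposition~\ref{a2}(1) applied to the $(s+1)$-point set $\{p\}\cup S$, since $t-1\ge s$. For (2), when $t=s\ge 2$ and $\{p\}\cup S$ is not contained in a line, it is Proposition~\ref{a2}(2), since $t-1=(s+1)-2$.

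For part (3), let $R\subset H$ be a line with $S\cup\{p\}\subset R$ and apply~\eqref{eqa2} with $C=R$. Now $Z\cap R=((2p,H)\cap R)\cup S$ has length $s+2$ in $R$, while $\mathrm{Res}_{R,H}(Z)=\{p\}$ (again $\mathrm{Res}_{R,H}((2p,H))=\{p\}$ by Remark~\ref{r2.1}, and $\mathrm{Res}_{R,H}(S)=\emptyset$ because $S\subset R$). Hence at $t=s$,
\[
0\to \Ii_{p,H}(s-1)\to \Ii_{Z,H}(s)\to \Ii_{Z\cap R,R}(s)\to 0 ,
\]
where $\Ii_{Z\cap R,R}(s)$ is the degree $s-(s+2)=-2$ line bundle on $R\cong\CC\PP^1$, so $h^0=0$ and $h^1=1$. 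Since $s\ge 1$ we have $h^1(H,\Ii_{p,H}(s-1))=0$, and $h^2(H,\Ii_{p,H}(s-1))=h^2(H,\Oo_H(s-1))=0$ by Remark~\ref{a5}; the long cohomology sequence then gives $h^1(H,\Ii_{Z,H}(s))\cong h^1(R,\Ii_{Z\cap R,R}(s))=1$.

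The only slightly delicate point is the scheme-theoretic bookkeeping — correctly identifying $Z\cap L$, $Z\cap R$ and the residual schemes $\mathrm{Res}_{L,H}(Z)$, $\mathrm{Res}_{R,H}(Z)$ — but these are exactly the local computations recorded in Remark~\ref{r2.1} and in the fat-point discussion preceding it. Granting those, the argument is a routine diagram chase together with Proposition~\ref{a2} and the cohomology of line bundles on $\CC\PP^1$, so I anticipate no real obstacle.
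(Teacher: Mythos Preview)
Your proof is correct and follows the same overall strategy as the paper—residual exact sequences along lines through $p$ to strip the fat point down to $\{p\}$, then Proposition~\ref{a2}—but your choice of line in parts (1) and (2) is cleaner. The paper takes a line through $p$ that \emph{meets} $S$ (with $|L\cap S|$ minimal, so $\le\lfloor s/2\rfloor$), which forces separate bounds on the size of the residual set and on $\deg(Z\cap L)$ before Proposition~\ref{a2}(1) can be applied. By instead choosing $L$ through $p$ with $L\cap S=\emptyset$, you get $\mathrm{Res}_{L,H}(Z)=\{p\}\cup S$ on the nose and can invoke Proposition~\ref{a2}(1) or (2) directly for this $(s+1)$-point set at degree $t-1$; this avoids the halving argument entirely. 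For part (3) your argument and the paper's coincide.
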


\begin{proof}
Since any two points of $H$ are collinear, if $s\le 1$, then $S\cup \{p\}$ is always contained in a line.

First assume the existence of a
line
$R\subset H$ such that
$S\cup \{p\}\subset R$. Note that
$Z\cap R$ is the disjoint union of $S$
and the degree $2$ scheme $(2p,R)$ with $p$ as its support. Hence $\deg (Z\cap R) =s+2$ and so $\Ii _{Z\cap R,R}(t)$ is a line
bundle on $R\cong \CC\PP^1$ with degree $t-s-2$. The cohomology of line bundles on $\CC\PP^1$ gives $h^1(R,\Ii _{Z\cap R,R}(t))=0$
if $t\ge s+1$ and $h^1(R,\Ii _{Z\cap R,R}(t))=1$ otherwise. We have $\mathrm{Res}_{R,H}(Z) =\{p\}$ and hence a residual exact
sequence
\begin{equation}\label{eqa8}
0\to \Ii _{p,H}(t-1)\to \Ii _{Z,H}(t)\to \Ii _{Z\cap R,R}(t)\to 0.
\end{equation}
We have $h^1(H,\Ii _{p,H}(x))=0$ for all $x\ge 0$ (see~\cite[Corollary 3.13 pag 150]{miranda}). 
Remark~\ref{a5} gives $h^2(H,\Ii _{p,H}(t-1))=0$. Hence the long cohomology
exact sequence
of~\eqref{eqa8} gives $h^1(H,\Ii _{Z,H}(t)) =0$ for all $t\ge s+1$ and $h^1(H,\Ii _{Z,H}(s))=1$.

Now assume that $S\cup \{p\}$ is not contained in any line and take a line $L\subset \CC\PP^3$ such that $p\in L$, $L$ contains
at least one point of $S$ and with $|L\cap S|$ minimal among all lines through $p$ intersecting $S$. Since there are at least
$2$ lines through $p$ meeting $S$, we have
$|L\cap S|
\le
\lfloor s/2\rfloor$. We get a residual exact sequence
\begin{equation}\label{eqa9}
0\to \Ii _{\mathrm{Res}_{L,H}(Z)}(t-1)\to \Ii _{Z,H}(t)\to \Ii _{Z\cap L,L}(t)\to 0.
\end{equation}The scheme $\mathrm{Res}_{L,H}(Z)$ is the union of $p$ and $S\setminus S\cap L$. Thus $\mathrm{Res}_{L,H}(Z)$
is a finite set with cardinality at most $\lfloor s/2\rfloor +1$. Proposition~\ref{a2} gives $h^1(H,\Ii
_{\mathrm{Res}_{L,H}(Z)}(t-1))=0$ for all $t\ge \lfloor s/2\rfloor +1$. Since $|S\cap L|\le s-1$, we have $\deg (L\cap Z) =2
+|L\cap S|
\le s+1$, the cohomology of line bundles on $R\cong \CC\PP^1$ gives $h^1(R,\Ii _{Z\cap L,L}(t))=0$ for all $t\ge s$. To conclude the proof, use the long
cohomology
exact sequence of~\eqref{eqa9}.
\end{proof}

We now pass to the case of lines.

\begin{lemma}\label{a6}
Fix an integer $k>0$. Let $E\subset \CC\PP^3$ be a disjoint union of $k$ lines. Fix $q\in \CC\PP^3\setminus E$.
Then $h^1(\Ii _{\{q\}\cup E}(t)) =0$ for all $t\ge k$.
\end{lemma}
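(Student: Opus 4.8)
The plan is to prove this by induction on $k$, using the residual exact sequence~\eqref{eqa1} with respect to a suitably chosen plane $H$. Since $E$ is a disjoint union of $k$ lines and $q\notin E$, the base case $k=1$ should be handled directly: a single line $L$ plus a point $q\notin L$ spans at most a plane, and one computes $h^1(\Ii_{\{q\}\cup L}(t))=0$ for $t\ge 1$ either from the known value $h^1(\Ii_L(t))=0$ (Remark~\ref{a1}) together with the fact that imposing one general point drops $h^0$ by exactly one as soon as $t\ge 1$, or by a short exact-sequence argument. (When $q$ lies on the line through nothing — there is no constraint — the generic position is automatic since $q\notin E$.)

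For the inductive step, fix $k\ge 2$ and pick one component $L\subset E$, writing $E=L\sqcup E'$ with $E'$ a disjoint union of $k-1$ lines. The natural move is to choose a plane $H$ containing $L$ and as few other features as possible; since the lines of $E'$ are disjoint from $L$, a general such plane $H$ meets each line of $E'$ in exactly one point and meets $2q$ emptily if $q\notin H$ — but one may also need the case $q\in H$. I would split into the two cases. If $q\notin H$, then $\Res_H(\{q\}\cup E)=\{q\}\cup E'$ and $(\{q\}\cup E)\cap H = L\cup S$ where $S$ is the set of $k-1$ points $H\cap E'$; sequence~\eqref{eqa1} becomes
\begin{equation*}
0\to \Ii_{\{q\}\cup E'}(t-1)\to \Ii_{\{q\}\cup E}(t)\to \Ii_{(L\cup S),H}(t)\to 0.
\end{equation*}
By the inductive hypothesis the left term has vanishing $h^1$ for $t-1\ge k-1$, i.e. $t\ge k$. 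For the right term, $L\subset H$ is a line and $S$ consists of $k-1$ points in general position with respect to $L$ (we may choose $H$ so that $S\cap L=\emptyset$); using~\eqref{eqa2} with the divisor $L$ inside $H$ reduces $\Ii_{(L\cup S),H}(t)$ to $\Oo_H(t-1)$ (the residual of $L\cup S$ along $L$ is $S$, but actually $\Res_{L,H}(L\cup S)=S$ since $S\cap L=\emptyset$... more precisely the residual of the whole scheme along $L$ is $S$) and $\Ii_{S\cap L, L}(t)=\Oo_L(t)$; one then reads off $h^1(H,\Ii_{(L\cup S),H}(t))=0$ for $t\ge k-1$ from the cohomology of line bundles on $L\cong\CC\PP^1$ and the vanishing $h^1(\Oo_H(t-1))=h^2(\Oo_H(t-1))=0$. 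The long exact sequence then gives the claim for $t\ge k$.

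The one genuinely delicate point is what happens when the chosen plane is forced to contain $q$ (which I would avoid by genericity, but should address for completeness), and more importantly making sure the inductive hypothesis is applied at the right twist: the residual term carries $\Ii_{\{q\}\cup E'}(t-1)$, so one needs $t-1\ge k-1$, exactly matching the bound $t\ge k$ — the arithmetic lines up, which is the reassuring sign the induction is set up correctly. The main obstacle I anticipate is not any single cohomology computation (each is routine via $\CC\PP^1$ line bundles, Proposition~\ref{a2}, and Remark~\ref{a5}), but rather the bookkeeping of ensuring that a general plane $H\supset L$ simultaneously (i) avoids $q$, (ii) meets $E'$ transversally in $k-1$ distinct points, and (iii) places those points off $L$ — and then invoking Proposition~\ref{a2} (or the simpler line-bundle computation) in the configuration that actually arises. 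Since $E$ is a \emph{fixed} disjoint union and $q$ a \emph{fixed} point off $E$, such a plane exists because the bad loci (planes through $q$, planes tangent to or containing a component of $E'$, etc.) form a proper closed subset of the $\PP^2$ of planes containing $L$; this is where one spends a sentence or two rather than a calculation.
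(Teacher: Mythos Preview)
Your induction works, but the paper's argument is more direct and avoids induction entirely by choosing the plane differently: instead of taking $H\supset L$ with $q\notin H$, the paper lets $H$ be the plane spanned by $L$ and $q$. Then $\Res_H(\{q\}\cup E)=A:=E\setminus L$ is just a union of $k-1$ disjoint lines (no stray point), and Remark~\ref{a1} gives $h^1(\Ii_A(t-1))=0$ for $t\ge k-1$ immediately. The trace on $H$ is $L\cup S\cup\{q\}$ with $S=A\cap H$, and after peeling off $L$ one is left with $h^1(H,\Ii_{S\cup\{q\},H}(t-1))$, which vanishes for $t\ge k$ by Proposition~\ref{a2} applied to $k$ points. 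Your route puts $q$ into the residual instead, which forces the appeal to the $k-1$ case; both are fine, the paper's just saves a layer.

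Two small clean-ups in your write-up. First, your trace computation is garbled: the restriction of $L\cup S$ to $L$ is all of $L$, so the rightmost term of \eqref{eqa2} is $\Ii_{L,L}(t)=0$, and the sequence collapses to an isomorphism $\Ii_{L\cup S,H}(t)\cong\Ii_{S,H}(t-1)$ (equivalently, $\Ii_{L\cup S,H}=\Ii_{S,H}(-L)$ since $L\cap S=\emptyset$ and $L$ is a divisor); then Proposition~\ref{a2} for $k-1$ points gives the vanishing you want. Second, the genericity concerns about $H$ are overstated: for \emph{any} plane $H\supset L$, disjointness of the components of $E$ already forces each line of $E'$ to meet $H$ in a single point off $L$, so $|S|=k-1$ and $S\cap L=\emptyset$ automatically; the only thing to arrange is $q\notin H$, which excludes exactly one plane in the pencil through $L$.
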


\begin{proof}
Fix a line
$L\subseteq E$ and set
$A:= E\setminus L$. Let
$H$ be the plane spanned by $q$ and $L$. Since the lines of $E$ are pairwise disjoints, $H$ contains no line of $A$. Thus
$S:= A\cap H$ is a finite set of cardinality $k-1$ and $\mathrm{Res}_H(\{q\}\cup E) = A$. We have $H\cap (E\cup \{q\}) = S\cup \{q\}\cup L$ (as schemes) and hence we have
$h^1(H,\Ii _{(E\cup \{q\})\cap H}(t)) = h^1(H,\Ii _{S\cup \{q\}\cup L}(t)) =h^1(H,\Ii _{S\cup \{q\}}(t-1))$. By Proposition~\ref{a2} we have $h^1(H,\Ii _{\{q\}\cup S}(x)) =0$ for all $x\ge k-1$. Thus $h^1(H,\Ii _{(E\cup \{q\})\cap H}(t))=0$ for all $t\ge k$.

Consider now the following residual exact sequence
\begin{equation}\label{eqa10}
0 \to \Ii _A(t-1)\to \Ii _{\{q\}\cup E}(t) \to \Ii _{(\{q\}\cup E)\cap H}(t)\to 0.
\end{equation}
Remark~\ref{a1} gives $h^1(\Ii _A(x)) =0$ for all $x\ge k-2$, hence, thanks to the long cohomology exact sequence of~\eqref{eqa10} we get $h^1(\Ii _{\{q\}\cup E}(t))=0$ for all $t\ge k$.
\end{proof}

The following two lemmas deal with the interplay between a set of lines $E$ and a fat point.  They will be used in the proof of Theorem~\ref{i1} when dealing with singularities.

\begin{lemma}\label{a7}
Fix an integer $k>0$. Let $E\subset \CC\PP^3$ be a disjoint union of $k$ lines. Fix $q\in \CC\PP^3\setminus E$.
\begin{enumerate}
\item We have $h^1(\Ii _{2q\cup E}(t)) =0$ for all $t\ge k+1$.
\item If there is no line containing $q$ and intersecting all lines of $E$, then $h^1(\Ii _{2q\cup E}(k)) =0$.
\item If there is a line $R$ containing $q$ and intersecting all lines of $E$, then $h^1(\Ii _{2q\cup E}(k)) =1$.
\end{enumerate}
\end{lemma}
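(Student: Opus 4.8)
The plan is to mimic the structure of the proof of Lemma~\ref{a6}, but working with $2q$ in place of $\{q\}$ and invoking Lemma~\ref{a4} (the analogue of Proposition~\ref{a2} with a fat point on a plane) at the key step. Fix a line $L\subseteq E$, set $A:=E\setminus L$, and let $H$ be the plane spanned by $q$ and $L$; note $q\in H$ and $L\subset H$. Since the lines of $E$ are pairwise disjoint, $H$ contains no line of $A$, so $S:=A\cap H$ is a finite set with $|S|=k-1$, and $\mathrm{Res}_H(2q\cup E)=\{q\}\cup A$ because $\mathrm{Res}_H(2q)=\{q\}$ (as $q\in H$, using the facts recalled before Remark~\ref{r2.1}) and $\mathrm{Res}_H(A)=A$. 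Scheme-theoretically $H\cap(2q\cup E)=(2q,H)\cup S\cup L$, and since $L\subset H$ we can strip off $L$ via the residual sequence for the Cartier divisor $L$ in $H$, obtaining $h^1(H,\Ii_{(2q\cup E)\cap H,H}(t))=h^1(H,\Ii_{(2q,H)\cup S,L\text{-residual}}\cdots)$; more precisely, writing $Z:=(2q,H)\cup S$ inside $H$ one gets the comparison $h^1(H,\Ii_{(2q,H)\cup S\cup L,H}(t))=h^1(H,\Ii_{Z,H}(t-1))$ exactly as in Lemma~\ref{a6}, since $\mathrm{Res}_{L,H}((2q,H)\cup S\cup L)=Z$ and $h^1(L,\Ii_{(\text{that})\cap L,L}(t))=0$ for $t>0$ (it is a line bundle of degree $t-2$ on $\CC\PP^1$, as $q\in L$).

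Next I would apply Lemma~\ref{a4} to the plane $H$, the point $q$, and the set $S\subset H\setminus\{q\}$ of cardinality $s=k-1$. For part (1): Lemma~\ref{a4}(1) gives $h^1(H,\Ii_{Z,H}(x))=0$ for all $x\ge s+1=k$, hence $h^1(H,\Ii_{(2q\cup E)\cap H,H}(t))=0$ for all $t\ge k+1$. For parts (2) and (3), observe that there is a line containing $S\cup\{q\}$ if and only if there is a line through $q$ meeting every line of $A$ inside $H$, and — since $L\subset H$ passes through no... rather, since $q\in H$ and $L$ is the intersection-defining line — such a line automatically meets $L$ as well; the combinatorial content is that ``$S\cup\{q\}$ collinear'' is equivalent to ``there is a line through $q$ meeting all lines of $E$.'' Granting this dictionary, Lemma~\ref{a4}(2) gives $h^1(H,\Ii_{Z,H}(k-1))=0$ in case (2) and Lemma~\ref{a4}(3) gives $h^1(H,\Ii_{Z,H}(k-1))=1$ in case (3), i.e. $h^1(H,\Ii_{(2q\cup E)\cap H,H}(k))$ is $0$, resp.\ $1$.

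Finally I would feed this into the residual exact sequence on $\CC\PP^3$,
\begin{equation*}
0\to \Ii_{\{q\}\cup A}(t-1)\to \Ii_{2q\cup E}(t)\to \Ii_{(2q\cup E)\cap H,H}(t)\to 0 ,
\end{equation*}
obtained from~\eqref{eqa1} with $F=H$, $f=1$, using $\mathrm{Res}_H(2q\cup E)=\{q\}\cup A$. Since $A$ is a disjoint union of $k-1$ lines and $q\notin A$, Lemma~\ref{a6} gives $h^1(\Ii_{\{q\}\cup A}(x))=0$ for all $x\ge k-1$, so $h^1(\Ii_{\{q\}\cup A}(t-1))=0$ for $t\ge k$. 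The long cohomology sequence then yields $h^1(\Ii_{2q\cup E}(t))=h^1(H,\Ii_{(2q\cup E)\cap H,H}(t))$ for all $t\ge k$, from which all three statements follow by the previous paragraph (for (3) one also needs that the map $h^1(H,\cdot)\to h^2(\CC\PP^3,\Ii_{\{q\}\cup A}(t-1))$ is injective, which holds because $h^2(\Ii_{\{q\}\cup A}(k-1))=0$: this is a standard consequence of the fact that a union of lines and a point is $1$-dimensional and the twisted ideal sheaf has no $h^2$ for $t-1\ge -1$, cf.\ Remark~\ref{a5}). The main obstacle I anticipate is bookkeeping: correctly identifying the scheme-theoretic intersection $H\cap(2q\cup E)$ and the residual $\mathrm{Res}_H(2q\cup E)$, and carefully justifying the geometric equivalence between ``$S\cup\{q\}$ collinear in $H$'' and ``some line through $q$ meets all of $E$'' — here one must check that a line $R\ni q$ meeting all lines of $E$ necessarily lies in $H$ (it meets $L$, and it meets some point of each line of $A$; if it met two of these it would lie in $H$, but one should argue directly that $R\subset H$ from $R\cap L\ne\emptyset$ together with $q\in R\cap H$ giving two points of $R$ in $H$).
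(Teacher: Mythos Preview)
Your approach is essentially the paper's: the same plane $H=\langle L,q\rangle$, the same residual sequence~\eqref{eqa1} with $F=H$, Lemma~\ref{a6} for the left term $\Ii_{\{q\}\cup A}(t-1)$, and Lemma~\ref{a4} for the right term after stripping off $L$. Two small corrections are in order. First, you write ``as $q\in L$'' when computing the restriction to $L$, but in fact $q\notin E\supseteq L$; hence $(2q,H)\cup S$ is disjoint from $L$, the quotient sheaf in the residual sequence on $H$ is $\Oo_L(t)$ (degree $t$, not $t-2$), and your conclusion $h^1(H,\Ii_{(2q,H)\cup S\cup L,H}(t))=h^1(H,\Ii_{Z,H}(t-1))$ still holds. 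Second, Remark~\ref{a5} concerns zero-dimensional schemes in a surface and does not apply to $\Ii_{\{q\}\cup A}$ in $\CC\PP^3$; the vanishing $h^2(\Ii_{\{q\}\cup A}(k-1))=0$ you need follows instead from the structure sequence $0\to\Ii_{\{q\}\cup A}\to\Oo_{\CC\PP^3}\to\Oo_{\{q\}\cup A}\to 0$, since $h^2(\Oo_{\CC\PP^3}(k-1))=0$ and $h^1(\Oo_{\{q\}\cup A}(k-1))=h^1(\Oo_A(k-1))=0$.

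With that $h^2$ vanishing in hand, your treatment of part~(3) is actually cleaner than the paper's: the long exact sequence then gives $h^1(\Ii_{2q\cup E}(k))=h^1(H,\Ii_{Z,H}(k-1))=1$ directly. The paper instead only extracts $h^1\le 1$ from the sequence and then rules out $h^1=0$ by a separate base-locus argument (the transversal line $R$ meets $\{q\}\cup E$ in degree $k+1$, hence lies in the base locus of $|\Ii_{\{q\}\cup E}(k)|$, forcing $h^0(\Ii_{2q\cup E}(k))\ge h^0(\Ii_{\{q\}\cup E}(k))-2$). Your geometric dictionary ``$S\cup\{q\}$ collinear in $H$ $\Leftrightarrow$ some line through $q$ meets all of $E$'' is exactly what the paper uses (implicitly), and your justification that such an $R$ must lie in $H$ because it contains $q$ and a point of $L$ is correct.
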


\begin{proof}
Fix a line $L\subseteq E$ and call $H$ the plane spanned by $L\cup \{q\}$. Set $A:= E\setminus L$ and $S:= A\cap H$.
We have $\mathrm{Res}_H(2q \cup E) = \{q\}\cup A$. By Lemma~\ref{a4} and~\ref{a6} we have $h^1(\Ii _{\{q\}\cup A}(t-1)) =0$ for all $t\ge k$
and if $t=k-1$ and there is no line $R$ through $q$ containing $q$ and meeting all lines of $E$, while $h^1(\Ii _{\{q\}\cup A}(k-2)) = 1$ if there is such $R$.
 We have $H\cap (2q\cup E) = L \cup S\cup (2q,H)$ and hence $h^1(H,\Ii _{H\cap (2q\cup E)}(t))= h^1(H,\Ii _{L\cup S\cup (2q,H),H}(t))
= h^1(H,\Ii _{S\cup (2q,H),H}(t-1))$. Since $|S|=k-1$, Lemma~\ref{a4} gives $h^1(H,\Ii _{S\cup (2q,H),H}(t-1))=0$ if either
$t>k$ or $t=k$ and there is no line through $q$ containing $A$ and $h^1(H,\Ii _{S\cup (2q,H),H}(k-1))=1$ if there
is a line $R\subset H$ containing $\{q\}\cup A$ and hence meeting each line of $A$; since $L, R\subset H$, we get $L\cap R\ne
\emptyset$. Thus the residual exact sequence of $H$:
\begin{equation}\label{eqa+a1}
0 \to \Ii _{\{q\}\cup A}(t-1)\to \Ii _{2q\cup E}(t)\to \Ii _{L\cup (2q,H),H}(t)\to 0
\end{equation}
gives the lemma, except that in the set-up of (3) it only gives  $h^1(\Ii_{2q\cup E}(k)) \le 1$. Assume then $h^1(\Ii _{2q\cup E}(k)) =0$, i.e. assume $h^0(\Ii _{2q\cup E}(k)) =h^0(\Ii _{\{q\} \cup E}(k))-3$. Since $\deg (R\cap (\{q\}\cup E))=k+1$, the line $R$ is in the base locus of $|\Ii _{\{q\}\cup E}(k)|$. Thus the degree $2$ scheme $R\cap 2q$ is in the base locus of $|\Ii _{\{q\}\cup E}(k)|$ and hence $h^0(\Ii _{2q\cup E}(k)) \ge h^0(\Ii _{\{q\}\cup E}(k))-2$.
\end{proof}

We now examine the case in which $q\in E$.
\begin{lemma}\label{a8}
Fix an integer $k>0$, a union $E\subset \CC\PP^3$ of $k$ disjoint lines and $q\in E$. Then $h^1(\Ii _{2q\cup E}(x)) =0$ for
all $x\ge k$.
\end{lemma}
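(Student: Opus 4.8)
The plan is to reduce to one fewer line by cutting with a plane through the line of $E$ containing $q$, in the same spirit as the proof of Lemma~\ref{a7}; the only new feature is that now $q$ lies on a line of $E$. Write $L\subseteq E$ for the line through $q$ and set $A:=E\setminus L$, a disjoint union of $k-1$ lines with $q\notin A$. I would fix \emph{any} plane $H\subset\CC\PP^3$ with $L\subset H$: no genericity is needed, because a line of $A$ is disjoint from $L$ and hence cannot lie in a plane containing $L$ (two coplanar lines meet), so it meets $H$ in a single point off $L$. Thus $S:=A\cap H$ consists of exactly $k-1$ points, none on $L$, and $\mathrm{Res}_H(A)=A$.

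Next I would write the residual exact sequence \eqref{eqa1} for the plane $H$ (so $f=1$) and identify its outer terms. On the kernel side, $\mathrm{Res}_H(2q\cup E)=\{q\}\cup A$: indeed $\mathrm{Res}_H(2q)=\{q\}$ since $H$ is smooth at $q$, $\mathrm{Res}_H(L)=\emptyset$ since $L\subset H$, and $\mathrm{Res}_H(A)=A$ as just noted. On the quotient side, $(2q\cup E)\cap H=(2q,H)\cup L\cup S$ scheme-theoretically. The kernel term $\Ii_{\{q\}\cup A}(t-1)$ then has $h^1=0$ for $t-1\ge k-1$ by Lemma~\ref{a6}, applied to the $k-1$ lines $A$ and the point $q\notin A$ (with $A=\emptyset$, and the assertion trivial, when $k=1$); that is, for all $t\ge k$.

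The heart of the argument is the vanishing $h^1(H,\Ii_{(2q\cup E)\cap H,H}(t))=0$ for $t\ge k$, which I would get by one further residuation, this time of $L$ inside $H$, via \eqref{eqa2}. The key observation is that $L$ is one of the components of $W:=(2q,H)\cup L\cup S$, so $W\cap L=L$ as schemes and the restriction term $\Ii_{W\cap L,L}(t)=\Ii_{L,L}(t)$ vanishes, while residuating $L$ away collapses the fat point $(2q,H)$ to the reduced point $q$ (as in Remark~\ref{r2.1}), giving $\mathrm{Res}_{L,H}(W)=\{q\}\cup S$, a reduced set of $k$ points (note $q\notin S$). Hence \eqref{eqa2}, together with $\Oo_H(t)(-L)=\Oo_H(t-1)$, degenerates to an isomorphism $\Ii_{W,H}(t)\cong\Ii_{\{q\}\cup S,H}(t-1)$, and Proposition~\ref{a2}(1) applied to these $k$ points gives $h^1=0$ for $t-1\ge k-1$. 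Feeding both bounds into the long exact cohomology sequence of \eqref{eqa1} yields $h^1(\Ii_{2q\cup E}(t))=0$ for all $t\ge k$.

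The one delicate point I expect to need care is the scheme-theoretic bookkeeping in that last step: checking that a \emph{single} residuation of $L$ is enough to replace $(2q,H)\cup L$ by the reduced point $q$ and to annihilate the $L$-restriction term. This is a local computation at $q$: in coordinates $x,y$ on $H$ with $L=\{y=0\}$ passing through $q$, the ideal of $(2q,H)\cup L$ near $q$ is $(xy,y^2)$, whose colon ideal by $y$ is the maximal ideal $(x,y)$ of $q$ and whose image in $\Oo_L$ is the zero ideal. Once this is verified, everything else is a routine application of the cohomological tools already developed in this section, so I do not anticipate further obstacles.
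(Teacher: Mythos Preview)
Your proof is correct and follows essentially the same approach as the paper's: cut with a plane $H\supset L$, identify $\mathrm{Res}_H(2q\cup E)=\{q\}\cup A$ (handled by Lemma~\ref{a6}) and $(2q\cup E)\cap H=(2q,H)\cup L\cup S$, then residuate $L$ inside $H$ to reduce to Proposition~\ref{a2} for the $k$ points $\{q\}\cup S$. Your explicit local computation that the ideal of $(2q,H)\cup L$ at $q$ is $(xy,y^2)$, with colon ideal $(x,y)$ and zero image in $\Oo_L$, is exactly the verification the paper alludes to with ``if you prefer use local coordinates around $q$''.
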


\begin{proof}
Call $L$ the line of $E$ containing $q$ and set $A:= E\setminus L$. Let $H\subset \CC\PP^3$ be a plane containing $L$. Set $S:=
A\cap H$. Since any two lines of $H$ meet while the lines of $E$ are disjoint, $S$ is a finite set, $|S|=k-1$ and $S\cap L=\emptyset$. Since $q\in H$, we have $\mathrm{Res}_H(2q) =\{q\}$. Since $L$ is the only irreducible component of $E$ contained in $H$, we have $\Res _H(E)=A$. It is easy to check (if you prefer use local coordinates around $q$) that
$\mathrm{Res}_A(2q\cup E) =\{q\}\cup A$. Since $A$ is a union of $k-1$ disjoint lines, Lemma~\ref{a6} gives
$h^1(\Ii _{\mathrm{Res}_A(2q\cup E)}(t-1)) =0$ for all $t\ge k$.

We have $H\cap (2q\cup E) = L \cup S \cup (2q,H)$.  Note that $L\cap (2q,H) = (2q,L)$. Thus $\{q\}$ is the residual scheme $\mathrm{Res}_L(L\cup (2q,H))$ of $H$
with respect to $L$ (seen as a divisor in $H$). Thus 
$$h^1(H,\Ii _{L\cup S\cup (2q,H)}(t)) = h^1(H,\Ii _{\{q\}\cup S}(t-1)).$$ 
Since $k = |\{q\}\cup S|$, Proposition \ref{a2} gives
$h^1(H,\Ii _{\{q\}\cup S}(t-1))=0$ for all $t\ge k$. To conclude it is sufficient to look at the long cohomology exact sequence of the residual exact sequence~\eqref{eqa+a1} of $H$ in $\CC\PP^3$.
\end{proof}

We are now in position to prove our first main result, but first we recall the following fact.

\begin{remark}\label{ii4}
The set $\Delta$ of all degree $d$ surfaces of $\CC\PP^3$ (allowing
also the reducible ones and those with multiple components), is a projective space of dimension $\binom{d+3}{3} -1$. The set
of all singular $Y\in \Delta$ is a non-empty hypersurface $\Sigma$ of $\Delta$. Every complex closed subset of positive
dimension of $\Delta$ meets $\Sigma$. Thus there is no compact complex family of positive dimension parametrizing different
smooth degree $d$ surfaces of $\CC\PP^3$. Allowing singularities (but not allowing decomposable surfaces), we may get
complex positive-dimensional compact families of solutions.  
\end{remark}

%[inserire questa frase in un posto consono]
%
%By the case $s=k-1$ of Lemma~\ref{a1} have $h^1(\Ii _E(d))=0$ and  so the dimension of the projective space $|\Ii _E(d)|$ is equal to
%$\binom{d+3}{3} -k(d+1)-1 >0$.

\begin{proof}[Proof of Theorem~\ref{i1}]

 To prove Theorem~\ref{i1} it is sufficient to prove that a general $Y\in |\Ii _E(d)|$
is smooth and contains no line $L$ with $L\cap E=\emptyset$. We use several times that a finite intersection of non-empty
Zariski open subsets of $|\Ii _E(d)|$ is non-empty and obviously open and hence Zariski-dense
in $|\Ii _E(d)|$ to check separately the smoothness condition and the condition on lines.

Fix any
line
$L\subset
\CC\PP^3$ such that
$L\cap E=\emptyset$. By Remark~\ref{a1} for the integer $s=k+1$ the set $|\Ii _{E\cup L}(d)|$ is a linear subspace of $|\Ii
_E(d)|$ with codimension
$d+1$.
Since the set $Gr(2,4)$ of all lines of $\CC\PP^3$ has dimension $4$, the set of all $Y\in |\Ii _E(d)|$ containing at least one
line $L$ with $L\cap E=\emptyset$ is contained in an algebraic subvariety of $|\Ii _E(d)|$ with codimension at least $d+1-4>0$
.
Thus a general $Y\in |\Ii _E(d)|$ contains no line $L$ with $L\cap E=\emptyset$.

We now deal with the smoothness property of a general $Y\in |\Ii _E(d)|$.
For any $q\in \CC\PP^3$ set $\Sigma _q:= |\Ii _{2q\cup E}|$, i.e. $\Sigma _q$ is the set of all $Y\in |\Ii _E(d)|$
singular at $q$. 
Thanks to Remark~\ref{ii4}, it is sufficient to prove that $\cup _{q\in \CC\PP^3} \Sigma _q$ is contained in a proper
closed algebraic subvariety of $|\Ii _E(d)|$. We have $\dim E=1$ and, for each $q\in E$, Lemma~\ref{a8} gives $\dim \Sigma _q
=\dim |\Ii _E(d)| -2$. Thus $\cup _{q\in E} \Sigma _q$ is contained in a proper closed subvariety of $|\Ii _E(d)|$. Let
$\Delta$ be the set of all
$q\in \CC\PP^3\setminus E$ such that $h^1(\Ii _{2q\cup E}(d)) >0$. If $q\in \CC\PP^3\setminus (\Delta \cup E)$, then $\dim \Sigma _q
= \dim |\Ii _E(d)| -4$. Since $\dim \CC\PP^3 =3$, the set $\cup _{q\in \CC\PP^3\setminus (\Delta \cup E)} \Sigma _q$ is contained in
a proper closed subvariety of
$|\Ii _E(d)|$. Now assume $q\in \Delta$. By Lemma~\ref{a7} we have $k=d$, $\dim \Sigma _q = \dim |\Ii _E(d)| -3$ and $\Delta$
is the intersection with $\CC\PP^3\setminus E$ of the union of the lines $R$ meeting each line of $E$. Since $\dim \Sigma _q =
\dim |\Ii _E(d)| -3$ for each $q\in \Delta$, it is sufficient to prove that $\dim \Delta \le 2$. Let $Q$ be any quadric
surface containing
$3$ of the lines of
$E$. Bezout theorem implies that each line $R$ meeting each line of $E$ is contained in $Q$. Thus $\Delta \subseteq Q$ and so we get the thesis.
\end{proof}

\section{Finitely Many Twistor Lines and Proof of Theorems~\ref{i2}, \ref{i2.1} and~\ref{i3}}\label{S3}
This section is devoted to the proof of Theorems~\ref{i2}, \ref{i2.1} and~\ref{i3}, therefore we 
start to deal with twistor lines. At first we prove the Density Lemma~\ref{b2} which allows us 
to prove a number of results on surfaces containing twistor lines only using already known material.
Then, we will use the map $j$ to prove the more precise Theorem~\ref{i3}.
We recall that the Grassmannian $Gr(2,4)$ can be identified with the Klein quadric $\Kk=\{t_{1}t_{6}-t_{2}t_{3}+t_{4}t_{5}=0\}\subset \CC\PP^{5}$ via Pl\"ucker embedding. The map $j$ induces, then, a map on $\CC\PP^{5}$ (that will be also denoted by $j$) defined as 
\begin{equation*}%\label{Eqj}
j([t_1:t_2:t_3:t_4:t_5:t_6]) =[\bar{t}_1:\bar{t}_5:-\bar{t}_4:-\bar{t}_3:\bar{t}_2:\bar{t}_6],
\end{equation*}
(see~\cite{altavilla, gensalsto}).
This new map $j$ identifies twistor lines in $\Lambda$ and it is an anti-holomorphic involution.

Let $k$ be a positive integer, then we have $\Lambda ^k\subset Gr (2,4)^k$ and both sets are compact with their euclidean topology.
$Gr (2,4)^k$ also has the Zariski topology, which obviously, if $k\ge 2$, is strictly finer than the product topology.

We now introduce a formalism that allows us to state simultaneously a finite number of properties
that may be true or false for a Zariski open dense set of twistor fibers. In particular, thanks to this result we give meaning to the words ``general sets of twistor lines''.
First of all fix an integer $k>0$ and a property $\wp$ which may be true or false for any set of $k$ different twistor lines. If $k\ge 2$
we extend $\wp$ to all $\LL =(L_1,\dots ,L_k)\in \Lambda ^k$ saying that $\wp$ is false at $\LL$ if $L_m=L_n$ for some $m\ne n$, while, if $L_m\ne L_n$ for all $m\ne n$, $\wp$ is true at $\LL$ if and only if it is true at $\{L_1,\dots ,L_k\}$. 
\begin{definition}\label{b1}
Fix an integer $k>0$ and a property $\wp$ that is is either \textit{true} or \textit{false} for any $k$ distinct twistor lines. We say that
$\wp$ is true for $k$ \textit{general twistor lines} or that $k$ \textit{general twistor lines} satisfy (the property) $\wp$ or that $\wp$ is true for a general
union of $k$ twistor lines if there is a non-empty Zariski open subset
$U$ of
$Gr (2,4)^k$ such that
$\wp$ is true for all $\LL =(L_1,\dots ,L_k)\in U\cap \Lambda ^k$.
\end{definition}

Note that the union of the big diagonals of $Gr (2,4)^k$, $k\ge 2$, is Zariski closed in $Gr (2,4)^k$ and it is different from
$Gr (2,4)^k$. Thus in Definition~\ref{b1} it is not restrictive to assume that all $(R_1,\dots ,R_k)\in U$ have $R_m\ne R_n$
for all $m\ne n$. This is the main reason for our extension of any $\wp$ as false when the $k$ twistor lines are not distinct.

\begin{lemma}[Density Lemma]\label{b2}
For any $k>0$ the following statements hold true.

\quad (a)  $\Lambda ^k$ is dense in $Gr (2,4)^k$ in which the latter has
the Zariski topology. 

\quad (b) Fix finitely many properties $\wp _i$, $1\le i\le s$, about $k$ distinct twistor lines and which are true for
$k$ general twistor lines. Then there are $k$ distinct twistor lines for which all $\wp _i$, $1\le i\le s$, are true.
\end{lemma}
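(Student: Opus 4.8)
The plan is to establish (a) first and then derive (b) as an easy consequence. For part (a), I would observe that $\Lambda$ sits inside the Klein quadric $\Kk \subset \CC\PP^5$ as the real locus of the anti-holomorphic involution $j$ given explicitly above, restricted to $\Kk$. Since $j$ has no fixed points on $\CC\PP^3$, the fixed locus of $j$ on $\Kk$ is exactly $\Lambda$, and topologically $\Lambda \cong \SSTT^4$ is a real $4$-dimensional submanifold of the complex $4$-dimensional variety $\Kk$. The key claim is that $\Lambda$ is Zariski dense in $\Kk$: indeed, the Zariski closure $\overline{\Lambda}$ of $\Lambda$ in $\Kk$ is a closed algebraic subset invariant under $j$; because $\Lambda$ is a totally real submanifold of half the real dimension, it cannot be contained in any proper complex-algebraic subvariety of the irreducible variety $\Kk$ (a proper subvariety has real dimension at most $6 < \dim_{\RR}\Lambda + \text{something}$—more precisely, if $\overline{\Lambda} \subsetneq \Kk$ were a proper subvariety $W$, then $W$ has complex dimension $\le 3$, and a totally real submanifold of dimension $4$ cannot be Zariski dense in, hence cannot sit inside, a $3$-dimensional complex variety since locally near a smooth point the real tangent space of $\Lambda$ would have to inject into the real tangent space of $W_{\mathrm{reg}}$, forcing $4 \le 6$, which is not yet a contradiction—so I must argue more carefully). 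The cleaner route: use that $j$ restricted to an affine chart is an antiholomorphic involution, so its fixed locus is (after a real-linear change of coordinates) a standard real form $\RR^n \subset \CC^n$; a polynomial vanishing on $\RR^n$ vanishes identically, hence $\Lambda$ is Zariski dense in $\Kk$. Then $\Lambda^k$ is Zariski dense in $\Kk^k = Gr(2,4)^k$ because a product of Zariski-dense subsets is Zariski dense in the product (this uses irreducibility of $\Kk$ and is a standard fact: if $Z \subset \Kk^k$ is closed and contains $\Lambda^k$, then for each fixed $\LL' \in \Lambda^{k-1}$ the slice is all of $\Kk$, and letting $\LL'$ vary and using density again in each coordinate yields $Z = \Kk^k$; formally one inducts on $k$).

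For part (b), given finitely many properties $\wp_i$, $1 \le i \le s$, each true for $k$ general twistor lines, Definition~\ref{b1} furnishes for each $i$ a non-empty Zariski open $U_i \subseteq Gr(2,4)^k$ with $\wp_i$ true on $U_i \cap \Lambda^k$. Then $U := \bigcap_{i=1}^s U_i$ is a non-empty Zariski open subset of the irreducible variety $Gr(2,4)^k$ (non-empty because a finite intersection of dense opens in an irreducible space is dense, in particular non-empty). By part (a), $\Lambda^k$ is Zariski dense, so $U \cap \Lambda^k \ne \emptyset$; moreover, after intersecting with the complement of the big diagonals (which is Zariski open and meets $\Lambda^k$ since $\Lambda$ is infinite), any point of $U \cap \Lambda^k$ gives $k$ \emph{distinct} twistor lines satisfying all the $\wp_i$ simultaneously.

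The main obstacle I anticipate is making the Zariski-density argument for $\Lambda$ inside $\Kk$ fully rigorous: one must be careful that $\Lambda$ is not merely a real-analytic subset but genuinely a totally real form, so that the "a real polynomial vanishing on $\RR^n$ is zero" principle applies. Concretely, I would pick an affine chart of $\Kk$ meeting $\Lambda$, note that in suitable real-linear coordinates $j$ becomes complex conjugation composed with a linear map whose fixed set is a real-linear subspace of real dimension $= \dim_{\CC}\Kk$, so that $\Lambda$ locally contains a relatively-open piece of a standard $\RR^4 \subset \CC^4$; any holomorphic function (hence any polynomial) vanishing on that piece vanishes on $\Kk$ by the identity theorem plus irreducibility. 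Once density of $\Lambda$ in $\Kk$ is secured, the passage to $\Lambda^k \subset \Kk^k$ and part (b) are routine point-set topology on Noetherian/irreducible spaces.
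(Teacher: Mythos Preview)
Your proposal is correct and follows essentially the same route as the paper. The only difference is one of packaging: the paper handles the case $k=1$ by invoking the general principle that the fixed locus of an anti-holomorphic involution on a connected smooth algebraic variety is either empty or Zariski dense, whereas you unpack that principle explicitly (local holomorphic coordinates in which $j$ becomes conjugation, so $\Lambda$ locally looks like $\RR^4\subset\CC^4$, and a polynomial vanishing there vanishes identically). Your treatment of the passage from $k=1$ to general $k$ (product of Zariski-dense subsets is Zariski dense in a product of irreducible varieties, by induction) and of part (b) (finite intersection of non-empty Zariski opens in an irreducible space is non-empty, then apply (a)) matches the paper's argument; if anything, your inductive sketch for the product is more explicit than what the paper writes down. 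The initial dimension-count detour you abandon is indeed not sharp enough, but you correctly identify and pivot to the real-form argument, which is the right one.
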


\begin{proof}
Part (a) is true if $k=1$ because, as said before, $\Lambda$ is the set fixed by $j$
and we recall that the fixed locus of an anti-holomorphic involution which acts on 
a connected smooth algebraic manifold $M$, is either empty or Zariski dense in $M$.

Now assume $k\ge 2$. Every non-empty Zariski open subset of $Gr (2,4)^k$ is dense in $Gr (2,4)^k$. The Zariski topology of $Gr (2,4)^k$ is finer than the product topology of its factors. Thus if $U_i$, $1\le i\le s$,  are non-empty
Zariski open subsets of $Gr (2,4)$, then $U_1\times \cdots \times U_k$ is a non-empty and dense Zariski open subset of $Gr (2,4)^k$. 

Since a finite intersection of non-empty Zariski open subsets of $Gr (2,4)^k$ is non-empty, open and dense in $Gr (2,4)^k$, we get the thesis of part (a) and, as a direct consequence, also part (b).
\end{proof}

Thanks to the previous lemma, we are able to prove the following two results. The first is interesting
by itself and states that for any number $s\ge 4$ there are $s$ twistor lines such that no other line
intersects all of them. In the second corollary we compute first two cohomology numbers of $\Ii_{E}(t)$, 
$E$ being a general union of twistor lines.

\begin{definition}
Consider a set of $k$ disjoint lines $L_{1},\dots, L_{k}$. These are said to be~\textit{collinear} if there is another
line $R$ intersecting all of them.
\end{definition}

Thanks to~\cite[Proposition 2]{APS}, if five lines are collinear, then they lie on a cubic surface and if five
skew lines lie on a cubic surface, then they are collinear. In particular, five twistor lines that lie on a cubic surface
are collinear.

\begin{corollary}\label{c3.3}
For any integer $s\ge 5$, there are $s$ twistor lines which are not collinear.
%
%Fix an integer $s\ge 5$. Then there are $s$ twistor lines $L_1,\dots ,L_s$ such that there is no line
%$D\subset \CC\PP^3$ with $D\cap L_i\ne \emptyset$ for all $i$.
\end{corollary}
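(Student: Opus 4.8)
The plan is to use the Density Lemma~\ref{b2} to reduce the claim to the existence of \emph{some} $s$ disjoint lines that are not collinear, and then to exhibit such a configuration by a dimension count. More precisely, by Lemma~\ref{b2}(b) it suffices to show that ``the $s$ lines are not collinear'' is a property true for $s$ general twistor lines, i.e. that there is a non-empty Zariski open subset $U\subseteq Gr(2,4)^s$ such that every $(L_1,\dots,L_s)\in U$ consists of $s$ distinct, pairwise disjoint, non-collinear lines; intersecting this with $\Lambda^s$ (which is Zariski dense by part (a)) and applying part (b) then yields the desired $s$ twistor lines.

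First I would note that the condition ``$L_1,\dots,L_s$ are distinct and pairwise disjoint'' is visibly Zariski open and non-empty in $Gr(2,4)^s$, so it only remains to intersect it with the condition ``not collinear''. For the latter, I would argue by dimension count in the incidence variety. Recall $\dim Gr(2,4)=4$; fixing a line $R$, the locus of lines $L$ meeting $R$ is a hyperplane section of $Gr(2,4)$ under the Plücker embedding, hence of dimension $3$. Therefore the incidence variety $\Gamma=\{(R,L_1,\dots,L_s)\in Gr(2,4)^{s+1}\ :\ R\cap L_i\ne\emptyset\ \forall i\}$ has dimension at most $4+3s$. For $s\ge 5$ we have $4+3s<4s=\dim Gr(2,4)^s$, so the image of the projection $\Gamma\to Gr(2,4)^s$ (the collinear locus, or rather its closure) is a proper closed subvariety of $Gr(2,4)^s$; its complement is the required non-empty Zariski open set $U$.

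Intersecting the open set $U$ (for non-collinearity) with the open set for ``distinct and pairwise disjoint'' gives a single non-empty Zariski open subset of $Gr(2,4)^s$ witnessing the property for $s$ general twistor lines, and Lemma~\ref{b2}(b) finishes the proof. The only mild subtlety I anticipate is bookkeeping: one must make sure the collinear locus really is the image of a projection from a quasi-projective incidence variety (so that Chevalley's theorem applies and the collinear locus is constructible, hence its closure is a proper subvariety since it lies in one), and that the bound $\dim\Gamma\le 4+3s$ is honest even on components where some $L_i$ coincide or the configuration degenerates — but these degenerate configurations only \emph{lower} the dimension of the corresponding component, so the estimate is safe. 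I would state the bound $s\ge 5$ as exactly the first value making $4+3s<4s$; for $s\le 4$ one genuinely expects generic collinearity (through four general lines there pass finitely many transversals), which is why the hypothesis cannot be relaxed. Thus the main obstacle is not conceptual but simply presenting the incidence-variety dimension count cleanly.
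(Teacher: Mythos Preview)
Your proof is correct, but it follows a genuinely different route from the paper's. The paper argues constructively: it first picks four specific twistor lines $L_1,\dots,L_4$ not all lying on a common smooth quadric (for instance, four of the twistor lines on the Fermat cubic), observes that the unique quadric $T$ through $L_1,L_2,L_3$ forces any common transversal $R$ to lie in the opposite ruling of $T$, and hence that there are at most $|T\cap L_4|\le 2$ transversals to $L_1,\dots,L_4$; then it invokes the Density Lemma only at the very end to find a fifth twistor line avoiding the (at most two) hypersurfaces $\Gamma_i\subset Gr(2,4)$ of lines meeting those transversals.

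Your argument instead is a clean global dimension count on the incidence variety $\Gamma\subset Gr(2,4)^{s+1}$, showing that the collinear locus has dimension at most $4+3s<4s$ for $s\ge 5$, and then applies the Density Lemma to the whole open complement at once. This has the pleasant byproduct of proving the stronger statement that \emph{general} $s$-tuples of twistor lines are non-collinear (not merely that some exist), and it avoids the explicit geometry of the quadric through three lines. The paper's approach, on the other hand, exhibits concrete twistor lines (via the Fermat cubic) and makes the classical ``two transversals to four general lines'' picture visible, which ties in with the discussion of cubics and collinearity surrounding the statement. Both arguments are short; yours is the more uniform one, the paper's the more geometric one.
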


\begin{proof}
Of course, it is sufficient to do the case $s=5$. Take any $4$ distinct twistor lines $L_1,L_2,L_3,L_4$ not all of them contained
in a smooth quadric surface (e.g. take $4$ of the twistor lines contained in the Fermat cubic (see~\cite{sv2}). 
There is a unique
quadric surface $T$ containing $L_1,L_2,L_3$, this quadric surface is smooth and $L_1,L_2,L_3$ are in the same ruling of
$T$, say $L_i\in |\Oo _T(1,0)|$, $i=1,2,3$ (see~\cite[page 478]{GH}). By Bezout theorem each line $R\subset \CC\PP^3$ intersecting each
$L_i$, $i=1,2,3$,
is contained in $T$. Since any two lines in the same ruling of $T$ are disjoint, $R$ must be an element of $|\Oo _T(0,1)|$.  Since
$L_4\nsubseteq T$, the set
$L_4\cap T$ has either cardinality
$2$ (case
$L_4$ transversal to
$T$) or cardinality $1$ (case $L_4$ tangent to $T$). Set $e:= |T\cap L_4|$ and let $D_i$, $1\le i\le e$ be the line
in $|\Oo _T(0,1)|$ containing one of the points of $L_4\cap T$. These $e$ lines $D_i$ are the only lines of $\CC\PP^3$
meeting each of the lines $L_1,L_2,L_3,L_4$. It is sufficient to prove the existence of a twistor line
not meeting each of the $e$ lines $D_i$. Let $\Gamma _i$, $1\le i\le e$, denote the set of all lines of $\CC\PP^3$ meeting
$D_i$. Each $\Gamma _i$ is a hypersurface of $Gr(2,4)$
of degree $2$, but then, to conclude, it is sufficient to invoke the Density Lemma.
\end{proof}

\begin{corollary}\label{b3}
Fix an integer $k>0$. Let $E\subset \CC\PP^3$ be a general union of $k$ twistor lines. Then $h^0(\Ii _E(t)) =\max
\{0,\binom{t+3}{3} -k(t+1)\}$ and $h^1(\Ii _E(t)) = \max \{0, k(t+1)-\binom{t+3}{3}\}$ for all $t\in \NN$.

\end{corollary}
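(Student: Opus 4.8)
The plan is to reduce the statement about a \emph{general} union of $k$ twistor lines to the analogous statement about a \emph{general} union of $k$ arbitrary disjoint lines, and then invoke the Density Lemma~\ref{b2}. First I would observe that for a general union $E'$ of $k$ lines in $\CC\PP^3$ (general in $Gr(2,4)^k$), the cohomology $h^0(\Ii_{E'}(t))$ and $h^1(\Ii_{E'}(t))$ take the values asserted: for $t\ge k-1$ this is exactly Remark~\ref{a1} (where the lines need only be disjoint), giving $h^1=0$ and $h^0=\binom{t+3}{3}-k(t+1)$ whenever this is nonnegative; and for small $t$ one notes that $k$ general lines impose independent conditions on degree-$t$ forms up to the point where the space is exhausted, so that $h^0=\max\{0,\binom{t+3}{3}-k(t+1)\}$, whence $h^1=h^0(\Oo(t))-h^0(\Ii_{E'}(t))-(\text{Euler characteristic correction})=\max\{0,k(t+1)-\binom{t+3}{3}\}$ by a Riemann--Roch/Euler-characteristic count for the disjoint union of $k$ lines (each contributing $\chi(\Oo_{\PP^1}(t))=t+1$).

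Next, for each fixed $t\in\NN$, the conditions ``$h^0(\Ii_E(t))\le\max\{0,\binom{t+3}{3}-k(t+1)\}$'' and ``$h^1(\Ii_E(t))\le\max\{0,k(t+1)-\binom{t+3}{3}\}$'' are Zariski-open in $(L_1,\dots,L_k)\in Gr(2,4)^k$, since $h^0$ and $h^1$ are upper semicontinuous in flat families and the upper bounds are the generic values established in the previous step (and the reverse inequalities hold automatically from the exact sequence $0\to\Ii_E(t)\to\Oo(t)\to\Oo_E(t)\to 0$ together with $\chi$). Hence each defines a non-empty Zariski-open subset $U_t\subset Gr(2,4)^k$ on which the asserted formulas hold. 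Since only finitely many values of $t$ are genuinely in question — for $t\ge k-1$ Remark~\ref{a1} already applies to \emph{any} disjoint configuration, so in particular to twistor lines, and there are only finitely many remaining $t<k-1$ — I can take $U:=\bigcap_{0\le t<k-1}U_t$, still non-empty and Zariski-open.

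Finally I would apply the Density Lemma~\ref{b2}: since $\Lambda^k$ is Zariski-dense in $Gr(2,4)^k$, the open set $U$ meets $\Lambda^k$, so there is a non-empty Zariski-open subset $U\cap\Lambda^k$ of $\Lambda^k$ consisting of $k$-tuples of distinct twistor lines for which $h^0(\Ii_E(t))=\max\{0,\binom{t+3}{3}-k(t+1)\}$ and $h^1(\Ii_E(t))=\max\{0,k(t+1)-\binom{t+3}{3}\}$ for all $t$; by Definition~\ref{b1} this is precisely the assertion for $k$ general twistor lines. The main obstacle I anticipate is the case of small $t$ (i.e.\ $t<k-1$), where Remark~\ref{a1} does not directly apply: I need that $k$ general lines impose the maximal possible number of conditions on $H^0(\Oo(t))$, equivalently that the general union of $k$ lines has $h^0(\Ii_{E'}(t))$ as small as possible. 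This follows from a standard degeneration/specialization argument — degenerate to a configuration (e.g.\ lines in sufficiently general position, or a limit in which some lines become coplanar in a controlled way) for which the conditions can be checked to be independent, using the residual exact sequences~\eqref{eqa1} repeatedly to strip off one line at a time and induct on $k$, exactly in the style of Lemmas~\ref{a6}--\ref{a8}; semicontinuity then transports the bound to the general configuration.
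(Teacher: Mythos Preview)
Your overall structure matches the paper's proof exactly: reduce to the maximal-rank statement for a general union of $k$ lines in $Gr(2,4)^k$, observe that only finitely many values of $t$ (namely $0\le t\le k-2$) need to be checked because Remark~\ref{a1} handles $t\ge k-1$, use semicontinuity to see that each condition is Zariski-open, and then apply the Density Lemma~\ref{b2}.

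The gap is precisely where you yourself flag it. The assertion that $k$ general lines in $\CC\PP^3$ satisfy $h^0(\Ii_{E'}(t))\cdot h^1(\Ii_{E'}(t))=0$ for \emph{all} $t$ is the maximal-rank theorem of Hartshorne--Hirschowitz \cite{hh} (see also \cite{ccg}), and the paper simply cites it. Your proposed justification --- ``strip off one line at a time and induct on $k$, exactly in the style of Lemmas~\ref{a6}--\ref{a8}'' --- does not work in the range $t<k-1$. Taking a plane $H\supset L_k$ and using the residual sequence
\[
0\to \Ii_{E'\setminus L_k}(t-1)\to \Ii_{E'}(t)\to \Ii_{E'\cap H,H}(t)\to 0
\]
reduces $h^1(\Ii_{E'}(t))=0$ to $h^1(\Ii_{E'\setminus L_k}(t-1))=0$, so the induction only closes when $t-1\ge (k-1)-1$, i.e.\ $t\ge k-1$ --- exactly the range already covered by Remark~\ref{a1}. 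For smaller $t$ this na\"{\i}ve peeling loses: one must instead degenerate several lines simultaneously onto a smooth quadric (the ``m\'ethode d'Horace'' specialization used in \cite{hh}), which is a genuinely different and more delicate argument than anything in Lemmas~\ref{a6}--\ref{a8}. So either cite \cite{hh} (or \cite{ccg}) as the paper does, or be prepared to reproduce a nontrivial degeneration argument; the sketch you give does not suffice.
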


\begin{proof}
Note that for a fixed $t$ and any union $F$ of $k$ disjoint lines we have $h^0(\Ii _F(t)) =\max
\{0,\binom{t+3}{3} -k(t+1)\}$ $\Leftrightarrow$ $h^1(\Ii _F(t)) = \max \{0, k(t+1)-\binom{t+3}{3}\}$  $\Leftrightarrow$ $h^0(\Ii
_F(t))\cdot h^1(\Ii _F(t))=0$. By Remark~\ref{a1} we have $h^1(\Ii _F(t))=0$ for all $t\ge k-1$. Thus for a fixed $k$ we only need to check finitely many integers $t$, i.e. $0\leq t\leq k-2$.
For each $t$ the condition $h^0(\Ii
_F(t))\cdot h^1(\Ii _F(t))=0$ is an open condition for the Zariski topology of $Gr (2,4)$ by the semicontinuity theorem for
cohomology \cite[III.12.8]{h}. By~\cite[Theorem 0.1]{hh} or~\cite{ccg} these conditions are satisfied in a non-empty Zariski open subset
of $Gr (2,4)^k$. Using the Density Lemma~\ref{b2} we get the final thesis.
\end{proof}

We now pass to the proof of the first part of Theorem~\ref{i2}.

Let $E\subset \CC\PP^3$ be the union of $\nu(d)$ general lines. Notice that $\nu(d)$ is
the maximal integer
$y$ such that
$y(d+1)< \binom{d+3}{3}$. Therefore, by Corollary~\ref{b3} we have
 $h^1(\Ii _E(d))=0$ and thanks to Theorem~\ref{i1}, $\dim |\Ii _E(d)| =
d$ if $d\equiv 0,1\mod{3}$ and $\dim |\Ii _E(d)| =
(d-2)/3$ if $d\equiv 2\mod{3}$.  In particular, if $d=2$, then $\nu(d)=3$ and $\dim |\Ii _E(2)| =0$.
We start by proving the following general fact.
%The following result can be considered the first step of the proof 
%of Theorem~\ref{i2}.

\begin{proposition}\label{b5.1}
Let $E\subset \CC\PP^3$ be the union of $\nu(d)$ general twistor lines.
If $d \ge 5$, then every $Y\in |\Ii _E(d)|$ is integral, i.e. it is reduced and irreducible. If $d\le 4$, then a general $Y\in |\Ii _E(d)|$ is integral.
\end{proposition}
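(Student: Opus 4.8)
The plan is to show that the degree-$d$ form $F$ defining any $Y\in|\Ii_E(d)|$ must be irreducible (equivalently, $Y$ integral) when $d\ge 5$, and that reducibility confines $F$ to a proper closed subvariety of $|\Ii_E(d)|$ when $d\le 4$. First I would record the genericity of $E$ that will be used. By the maximal-rank theorem for general unions of disjoint lines in $\CC\PP^3$ invoked in the proof of Corollary~\ref{b3}, together with the Density Lemma~\ref{b2} and the observation that a non-empty Zariski open condition imposed on a subtuple of the factors of $Gr(2,4)^{\nu(d)}$ pulls back to a non-empty Zariski open subset of $Gr(2,4)^{\nu(d)}$ (which is irreducible), I may assume that $E$ is a union of $\nu(d)$ disjoint twistor lines such that \emph{every} sub-union $E'\subseteq E$ satisfies $h^0(\Ii_{E'}(a)) = \max\{0, \binom{a+3}{3} - \#(E')(a+1)\}$ for all $0\le a\le d$, where $\#(E')$ is the number of lines of $E'$.

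Suppose now that $Y=V(F)\in|\Ii_E(d)|$ is not integral, so $F=GH$ with $\deg G=a\ge 1$ and $\deg H=b:=d-a\ge 1$. Each line of $E$ is irreducible and contained in $V(G)\cup V(H)$, hence lies in $V(G)$ or in $V(H)$; splitting $E$ accordingly yields $E=E_1\sqcup E_2$ with $\#(E_i)=k_i$, $k_1+k_2=\nu(d)$, and with $G\in H^0(\Ii_{E_1}(a))$, $H\in H^0(\Ii_{E_2}(b))$ both nonzero. By the assumed genericity, $h^0(\Ii_{E_1}(a))\ge 1$ forces $\binom{a+3}{3}-k_1(a+1)\ge 1$, i.e. $k_1\le\nu(a)$ (recall $\nu(m)$ is the largest integer $y$ with $y(m+1)<\binom{m+3}{3}$), and likewise $k_2\le\nu(b)$. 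Hence $\nu(d)=k_1+k_2\le\nu(a)+\nu(b)$.

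The heart of the matter is then the numerical inequality $\nu(a)+\nu(b)<\nu(a+b)$ for all integers $a,b\ge 1$ with $a+b\ge 5$. From~\eqref{eqnu} one has $(m^2+5m)/6\le\nu(m)\le(m^2+5m+4)/6$, so the inequality follows once $(a^2+5a+4)/6+(b^2+5b+4)/6<((a+b)^2+5(a+b))/6$, which reduces to $2ab>8$; for $a+b\ge 5$ this holds in every case except $\{a,b\}=\{1,4\}$, where one checks directly that $\nu(1)+\nu(4)=1+6=7<9=\nu(5)$. Consequently, for $d\ge 5$ the relation $\nu(d)\le\nu(a)+\nu(b)$ is impossible: no reducible $F$ exists and every $Y\in|\Ii_E(d)|$ is integral.

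For $d\le 4$ the same bookkeeping shows that a reducible $F=GH$ forces $k_1\le\nu(a)$, $k_2\le\nu(b)$ with $k_1+k_2=\nu(d)$; for $d=2$ this is already impossible since $\nu(1)+\nu(1)=2<3=\nu(2)$, while for $d=3,4$ it leaves only finitely many combinatorial types (a plane through one line of $E$ and a quadric through three others for $d=3$; two quadrics through three lines each for $d=4$). As there are finitely many decompositions $d=a+b$ and finitely many partitions of the $\nu(d)$ lines, the reducible $Y$ lie in a finite union of images of the multiplication maps $|\Ii_{E_1}(a)|\times|\Ii_{E_2}(b)|\to|\Ii_E(d)|$, and a dimension count—using $\binom{a+3}{3}+\binom{b+3}{3}<\binom{d+3}{3}$, the bounds $k_i\le\nu(\cdot)$, and $\dim|\Ii_E(d)|=\binom{d+3}{3}-\nu(d)(d+1)-1$ (from $h^1(\Ii_E(d))=0$, Corollary~\ref{b3})—shows each such image has dimension strictly below $\dim|\Ii_E(d)|$. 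Hence a general $Y\in|\Ii_E(d)|$ is integral. I expect the genuine work to lie in the numerical inequality $\nu(a)+\nu(b)<\nu(a+b)$ and in the small-$d$ verification, rather than in any conceptual difficulty.
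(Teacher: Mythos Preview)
Your proposal is correct and follows essentially the same approach as the paper: split the lines of $E$ among the irreducible components of a would-be reducible $Y$, bound the number of lines in a degree-$a$ component by $\nu(a)$ via Corollary~\ref{b3}, derive the contradiction $\nu(d)\le\nu(a)+\nu(b)$ for $d\ge 5$ from the explicit bounds~\eqref{eqnu}, and handle $d\le 4$ by a direct dimension comparison. The only cosmetic difference is that the paper first disposes of reducedness separately (using $h^0(\Ii_E(d-1))=0$), whereas you fold it into the factorization argument; both are fine.
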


\begin{proof}
Thanks to Corollary~\ref{b3}, we have that $h^0(\Ii _E(d-1)) =0$ therefore, since 
$E\subset Y_{\mathrm{red}}$, then each $Y\in |\Ii_{E}(d)|$ has no multiple component, i.e.: all $Y$ are reduced.

Assume now that $|\Ii_{E}(d)|\ni Y =Y_1\cup Y_2$ with $Y_1$ a surface of degree $y$, $Y_2$ a surface of degree $d-y$ and $0<y<d$ (we do
not assume
that some of the $Y_i$ are irreducible). Let $E_i$, $i=1,2$, be the set of all lines $L\subset E\cap Y_{i}$. 
We have $E= E_1\cup E_2$. Since $E$ is a union of general twistor lines, every union of
some of the lines of $E$ is a union of general twistors lines. Moreover if $x_i:=\deg (E_i)$, 
we have $x_1+x_2\ge \nu(d)$.

Set now $d\le 4$ and recall that $\nu(1)=1$, $\nu(2)=3$, $\nu(3)=4$, $\nu(4)=6$.
Thus (assuming $y\le d-y$), we only have to check $d=3$ and $y=1$ or $d=4$ and $y=2=d-y$.
In both cases $d-y=2$, hence, since $\dim |\Ii_{E_{2}}(2)|=0$ (and $\dim |\Ii_{L}(1)|=1$, for $L\in E$), then 
we have $|\Ii_{E_{2}}(2)|=\{Y_{2}\}$. But $\dim |\Ii_{E}(3)|=4$ and $\dim |\Ii_{E}(4)|=5$ and so a
general $Y\in |\Ii_{E}(d)|$, $d=3,4$, is irreducible.
%a general plane contains at most one twistor line and a general quadric at most 3 twistor lines, then, for at least one between $Y_{1}$ and $Y_{2}$, we have $|\Ii _{E_1}(d)| = \{Y_1\}$, $|\Ii _{E_2}(d) |=\{Y_2\}$, while $h^0(\Ii _E)(d)) >1$
%(if $d=3$ we have $h^0(\Ii _E(3)) =4$, if $d=4$ we have $h^0(\Ii _E(4))=5$).

Now assume $d\ge 5$. By Corollary~\ref{b3} we have
$$
 \binom{y+3}{3}>x_{1}(y+1), \qquad  \binom{d-y+3}{3}> x_{2}(d-y+1),
$$
that is $x_1\le \nu(y)$ and $x_2\le \nu(d-y)$. Thus $\nu(y) +\nu(d-y) \ge \nu(d)$. 
Recall from Equation~\eqref{eqnu}, that, for all $t\ge 4$ we have
$$\frac{t^2+5t}{6} \le \nu(t) \le \frac{t^2+5t+4}{6}.$$ 
Thus, 
$$
d^2+5d \le y^2+5y+4+(d-y)^2+5(d-y)+4, 
$$
that is $y(d-y)\le 4$. Since $1\le y\le d-1$ and $d\ge 5$, the only possibility is $d=5$ and either $y=1$ or $y=4$. 
But have $\nu(1) = 1$, $\nu(4) =6$ and $\nu(5) =9>\nu(1)+\nu(4)$, obtaining then a contradiction.
\end{proof}

We can now perform the following proof. 
\begin{proof}[Proof of Theorem~\ref{i2} case (1):]
Consider a general set $E$ of $\nu(d)$ twistor lines with $d\ge 4$. Thanks to Corollary~\ref{b3} we have $\dim |\Ii_{E}(d)|>0$. Therefore, Proposition~\ref{b5.1} implies that, for $d\ge 4$, there is a positive dimensional family 
of irreducible projective surfaces containing $\nu(d)$ lines.
\end{proof}

Before proving the second part of Theorem~\ref{i2}, we need a technical result. In particular,
the following lemma will be used to control the singular locus of a generic surface
$Y\in|\Ii_{E}(d)|$.

\begin{remark}\label{b4.0}
Fix integers $a\ge x\ge 0$. Let $S\subset \CC\PP^1$ be a finite set with $|S|=x$. Since $h^1(\Oo _{\CC\PP^1}(a-x)) =0$
and the line bundle $\Oo _{\CC\PP^1}(a-x)$ is base-point-free, $S$ is the scheme-theoretic base locus of the linear system
$|\Ii _S(a)|$ on $\CC\PP^1$.
\end{remark}

\begin{lemma}\label{b4}
Fix integers $d$ and $x$ such that $d\ge 2$ and $0\le x \le 1+\frac{d(d-1)}{2}$. Let $S \subset \CC\PP^2$ be a
general finite subset of $\CC\PP^2$ with $|S|=x$. Then $h^1(\Ii _S(d)) =0$ and $S$ is the scheme-theoretic base locus of $|\Ii _S(d)|$. 
\end{lemma}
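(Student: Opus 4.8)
The plan is to prove both assertions simultaneously by induction on $d$, the only geometric input being the residual exact sequence~\eqref{eqa1} with respect to a line. The base case $d=1$ will be immediate: if $x=0$ then $|\Oo_{\CC\PP^2}(1)|$ is base-point-free, while if $x=1$ then $|\Ii_S(1)|$ is the pencil of lines through the single point of $S$, whose scheme-theoretic base locus is exactly that reduced point; in both cases $h^1=0$.

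For the inductive step I would fix $d\ge 2$, assume the statement in degree $d-1$, and first note the elementary arithmetic that the bound $x\le 1+\tfrac{d(d-1)}{2}$ always permits a splitting $x=a+b$ with $0\le a\le d$ and $0\le b\le 1+\tfrac{(d-1)(d-2)}{2}$: taking $b=\min\{x,\,1+\binom{d-1}{2}\}$ gives $a=x-b$ with $0\le a\le\binom{d}{2}-\binom{d-1}{2}=d-1$. I would then specialise $S$ to $S_1\sqcup S_2$, where $L\subset\CC\PP^2$ is a general line, $S_1\subset L$ is a general set of $a$ points, and $S_2$ is a general set of $b$ points disjoint from $L$. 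Since $\Res_L(S)=S_2$ and $S\cap L=S_1$, the sequence~\eqref{eqa1} with $F=L$, $f=1$, $t=d$ becomes
\begin{equation*}
0\to \Ii_{S_2}(d-1)\to \Ii_{S}(d)\to \Ii_{S_1,L}(d)\to 0,
\end{equation*}
where $\Ii_{S_1,L}(d)$ is the degree-$(d-a)$ line bundle on $L\cong\CC\PP^1$ and $d-a\ge 0$. By the inductive hypothesis in degree $d-1$, $h^1(\Ii_{S_2}(d-1))=0$ and the base locus of $|\Ii_{S_2}(d-1)|$ is $S_2$; by Remark~\ref{b4.0}, $h^1(\Ii_{S_1,L}(d))=0$ and the base locus of $|\Ii_{S_1,L}(d)|$ is $S_1$. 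The long exact cohomology sequence then gives both $h^1(\Ii_S(d))=0$ and the surjectivity of $H^0(\Ii_S(d))\to H^0(\Ii_{S_1,L}(d))$.

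To pin down the base scheme $B$ of $|\Ii_S(d)|$ I would use two inclusions. First, the scheme-theoretic trace $B\cap L$ equals the base scheme on $L$ of the restricted sections; by the surjectivity above these are precisely the divisors $S_1+D$ with $D\in|\Oo_L(d-a)|$, and since $|\Oo_L(d-a)|$ is base-point-free this forces $B\cap L=S_1$. Secondly, multiplication by the equation of $L$ embeds $H^0(\Ii_{S_2}(d-1))$ into $H^0(\Ii_S(d))$, so $B\subseteq L\cup\bigl(\text{base scheme of }|\Ii_{S_2}(d-1)|\bigr)=L\cup S_2$, and since $S_2\cap L=\emptyset$ this union is disjoint, so in fact $B\subseteq L\sqcup S_2$ as schemes. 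Combining the two: near a point of $S_2$ the second inclusion gives $B\subseteq S_2$, hence $B=S_2$ there since $S\subseteq B$; near a point of $L$ it gives $B\subseteq L$, hence $B=B\cap L=S_1$ there; and $B$ contains no point outside $L\sqcup S_2$. Therefore $B=S_1\sqcup S_2=S$.

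Since $S_1\sqcup S_2$ is a special configuration, the remaining task is the passage to a general $S$. Vanishing of $h^1(\Ii_S(d))$ is an open condition by semicontinuity of cohomology~\cite[III.12.8]{h}; over that open locus $h^0(\Ii_S(d))$ is locally constant, the base schemes of the systems $|\Ii_S(d)|$ form a family whose fibres are the individual base loci, and the further requirement that the base scheme of $|\Ii_S(d)|$ be finite of length at most $x$ --- equivalently, equal to $S$ --- is again open, by upper semicontinuity of fibre dimension and of length. As the specialised $S$ above satisfies both conditions, so does a general $S$. The step I expect to be most delicate is the scheme-theoretic (rather than merely set-theoretic) bookkeeping of $B$ in the two inclusions, together with making this last openness argument precise; everything else is routine residuation.
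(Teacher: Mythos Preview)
Your argument is correct and follows the same overall strategy as the paper: specialise $S$, apply the residual exact sequence with respect to a line, and induct on $d$, with semicontinuity at the end. The details differ in two pleasant ways. First, your specialisation is simpler: you put $a\le d-1$ general points on a single line $L$ and $b\le 1+\binom{d-1}{2}$ general points off $L$, whereas the paper spreads the points over $d-1$ lines $R_1,\dots,R_{d-1}$ with $|B_i|=i$ plus one extra point, and residuates off $R_{d-1}$. Second, for the scheme-theoretic base locus the paper argues point by point, checking for each $q\notin B$ that $h^1(\Ii_{B\cup\{q\}}(d))=0$ and for each $q\in B$ that $h^1(\Ii_{(B\setminus\{q\})\cup E}(d))=0$ with $E$ the degree-$2$ thickening of $q$ along the relevant $R_i$; your containment argument ($B\cap L=S_1$ from surjectivity of restriction, $B\subseteq L\sqcup S_2$ from the sub-series $\ell\cdot|\Ii_{S_2}(d-1)|$) is more structural and avoids these case checks. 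Both approaches need the same final semicontinuity step; your phrasing of it as openness of the condition ``length of the base scheme $\le x$'' is fine, and one clean way to make it precise is to note that on the locus where $h^1=0$ the evaluation $p^*\mathcal{V}\to\Ii_{\mathcal{Z}}(d)$ has coherent cokernel whose fibre vanishing (equivalently, base scheme $=S$) is open by Nakayama.
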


\begin{proof}
By the semicontinuity theorem for cohomology it is sufficient to find a finite set $A\subset \CC\PP^2$ such that $|A|=x$,
 $h^1(\Ii _A(d)) =0$ and $A$ is the scheme-theoretic base locus of $|\Ii _A(d)|$. Fix $d-1$ general lines $R_1,\dots ,R_{d-1}$, a point
$p\in \CC\PP^2\setminus (R_1\cup \cdots \cup R_{d-1})$, and
sets $B_i\subset R_i$, $i=1,\dots ,d-1$ with $|B_i| = i$ for all $i$ and $B_i\cap R_j=\emptyset$ for all $j\ne i$.
Set $B_0:= \{p\}$ and $B:= \cup _{i=0}^{d-1} B_i$. We have $|B| =1+\sum _{i=1}^{d-1} i = 1+(d-1)d/2$. Let $R_0 \subset \CC\PP^2$ be a general line through $p$.

\quad \emph{Claim 1:} We have $h^1(\Ii _B(d))=0$.

\quad \emph{Claim 2:} $B$ is the scheme-theoretic base locus of $|\Ii _B(d)|$.

To prove Claim 1 and Claim 2 we use induction on $d$. For both of them, the case $d=1$ is trivial.

\quad \emph{Proof of Claim 1:}  For the inductive proof use the
residual exact sequence
\begin{equation}\label{eqb2.1}
0 \to \Ii _{B\setminus B_{d-1}}(d-1) \to \Ii _B(d) \to \Ii _{B_{d-1},R_{d-1}}(d)\to 0.
\end{equation}

\quad \emph{Proof of Claim 2:} The scheme-theoretic base locus $\Bb$ of $|\Ii _B(d)|$ is contained in $R_0\cup \cdots \cup R_{d-1}$. Remark~\ref{b4.0}
gives $\Bb \cap R_{d-1} =B_{d-1}$. By the inductive assumption $B\setminus B_{d-1}$ is the base locus $\Bb '$ of $|\Ii _{B\setminus B_{d-1}}(d-1)|$
and $h^1(\Ii _{B\setminus B_{d-1}}(d-1)) =0$. Take $q\in R_0\cup \cdots \cup R_{d-1}$ such that $q\notin B$. If $q\in R_{d-1}$ we have
$h^1(R_{d-1},\Ii _{q\cup B_{d-1}, R_{d-1}}(d-1)) =0$ \cite[Corollary 3.13 pag 150]{miranda}. Since $h^1(\Ii _{B\setminus B_{d-1}}(d-1)) =0$, the residual exact sequence of $B\cup \{q\}$ with respect to $R_{d-1}$ gives $h^1(\Ii _{B\cup \{q\}}(d)) =0$, i.e. $q\notin \Bb$. Now assume $q\in R_i$ for some $i\le d-2$. The inductive assumption for Claim 2 gives
$q\notin \Bb'$ and so $h^1(\Ii _{(B\setminus B_{d-1})\cup \{q\}}(d-1)) =0$. The residual exact sequence of $B\cup \{q\}$ with respect to $R_{d-1}$ gives $h^1(\Ii _{B\cup \{q\}}(d)) =0$.
Thus $q\notin \Bb$.
Thus $B$ is the set-theoretic base locus of $|\Ii _B(d)|$. Since $\Bb \subset R_0\cup  \cdots \cup R_{d-1}$ and $B$ is contained in the smooth locus of $R_0\cup \cdots \cup R_{d-1}$, to prove Claim 2 it is sufficient to prove that for each $q\in B$, say $q\in R_i$,  $h^1(\Ii _{(B\setminus \{q\})\cup E}(d)) =0$, where $E$ is the degree $2$ zero-dimensional subscheme of $R_i$ with $q$ as its reduction. First assume $i=d-1$. We have $h^1(R_{d-1},\Ii _{(B_{d-1}\setminus \{q\})\cup E,R_{d-1}}(d)) =0$, because
$\deg ((B_{d-1}\setminus \{q\})\cup E) =d$ \cite[Corollary 3.13 pag 150]{miranda}. Since $ h^1(\Ii _{B\setminus B_{d-1}}(d-1))=0$, a residual exact sequence with respect to $R_{d-1}$ like~\eqref{eqb2.1} with $(B\setminus \{q\})\cup E$ instead
of $B$ gives $h^1(\Ii _{(B\setminus \{q\})\cup E}(d)) =0$. Now assume $i\le d-2$. Since $B\setminus B_{d-1} =\Bb '$ as schemes, we have
$h^1(\Ii _{((B\setminus B_{d-1})\setminus \{q\})\cup E}(d-1)) =0$. Use the residual exact sequence of $(B\setminus \{q\})\cup E$ with respect to $R_{d-1}$

If $x=1+(d-1)d/2$ we take $A:= B$. If $x<1+(d-1)d/2$ Remark~\ref{b4.0} and the proofs of Claims 1 and 2 show that we may take
as
$A$ any subset of
$B$ with cardinality $x$.
\end{proof}

\begin{proof}[Proof of Theorem~\ref{i2} case (2):]

 Let $E\subset \CC\PP^3$ be a general union of $\nu_n(d)$ twistor lines. Let $E'$ be a general union of $E$ and
$\nu(d)-\nu_n(d)$ twistor lines. By \textit{case (1)} we have $|\Ii _{E'}(d)|\ne \emptyset$ and every element of $|\Ii
_{E'}(d)|$ is integral.
Thus $|\Ii _E(d)|\ne \emptyset$ and a general $Y\in |\Ii _E(d)|$ is
irreducible. By Corollary~\ref{b3} we have
$h^1(\Ii _E(t)) =0$ for $t\ge d-1$ and in particular $\dim |\Ii _E(d)| =\binom{d+3}{3} -(d+1)\nu_n(d)-1$. Hence the thesis
follows from the following two claims.

\quad \emph{Claim 1:} A general $Y\in|\Ii_{E}(d)|$ is such that $E\cap Sing(Y)=\emptyset$. 

\quad \emph{Claim 2:}  A general $Y\in|\Ii_{E}(d)|$ is such that that $\mathrm{Sing}(Y)$ is finite.

\quad \emph{Proof of Claim 1:} Since $\dim E=1$, to prove that a general $Y \in |\Ii _E(d)|$ is smooth at
all points of
$E$ it is sufficient to prove that $h^0(\Ii _{2q\cup E}(d)) = h^0(\Ii _E(d)) -2$ for all $q\in E$,
i.e. we are going to prove that the space of surfaces singular at some point $q$ of $E$ has dimension strictly less than 
$|\Ii _E(d)|$ (recall Remark~\ref{r2.1}).
Since $q\in E$, the scheme
$2q\cap E$ has degree
$2$ and we have $h^0(\Ii _{2q\cup E}(d)) \le h^0(\Ii _E(d)) -2$. Hence to prove that the last inequality is an equality it is
sufficient
to prove that $h^1(\Ii _{2q\cup E}(d)) =0$. Call $L$ the connected component of $E$ containing $q$. Set $A:= E\setminus L$.

Let $H\subset \CC\PP^3$ be a general plane containing $L$ and set $S:= A\cap H$. Since any two lines of $H$ meet, $S$ is the union
of $\nu_n(d)-1$ distinct points. As
$\mathrm{Res}_H(2q) =\{q\}$ and
$q\in L$, we have $\mathrm{Res}_H(2q\cup E) = \{q\}\cup A$. Moreover, since $L\cap A=\emptyset$, we have $H\cap (2q\cup E)
= (2q,H)\cup L \cup S$. We have a residual exact sequence
\begin{equation}\label{eqb2}
0 \to \Ii _{\{q\} \cup A}(d-1) \to \Ii _{2q\cup E}(d)\to \Ii _{(2q,H) \cup L \cup S,H}(d)\to 0.
\end{equation}
Since $h^1(\Ii _E(d-1)) =0$, we have $h^1(\Ii _A(d-1))=0$ and $L$ imposes $d$ independent conditions to $|\Ii _A(d-1)|$. Since
$L\cong \CC\PP^1$
and $\Oo _L(d-1)$ is a degree $d-1$ line bundle, we get that any union of $d$ points of $L$ imposes $d$ independent conditions
to $|\Ii _A(d-1)|$. Hence $\dim |\Ii _{\{q\}\cup A}(d-1)| = \dim |\Ii _A(d-1)|-1$. Since $h^1(\Ii _A(d-1))=0$, we get
$h^1(\Ii _{\{q\}\cup A}(d-1)) =0$. By the long cohomology exact sequence of~\eqref{eqb2}, to conclude the proof that
a general $Y\in |\Ii _E(d)|$ is smooth at all points of $E$ it is sufficient to prove that $h^1(H,\Ii _{(2q,H)\cup L \cup
S,H}(d))=0$. We have $L\cap S=\emptyset$ and $L\cap (2q,H) =(2q,L)$ and so $\mathrm{Res}_{L,H}((2q,H)\cup S) =\{q\}\cup S$.
Hence we have 
$$h^1(H,\Ii _{(2q,H) \cup L \cup S,H}(d))=h^1(H,\Ii _{\{q\}\cup S}(d-1)).$$ 
Since $q\in L$ and $d\ge 2$, we have 
$$h^1(H,\Ii
_{\{q\}\cup S}(d-1)) \le h^1(H,\Ii _{L\cup S}(d-1)) = h^1(H,\Ii _{S,H}(d-2)).$$ 
The plane $H$ depends only from $L$, not from
the choice of $q$. Thus for a general $E$, we may see $S$ as the intersection of $H$ with $\nu_n(d)-1 = \nu(d-1)-1$
general twistor lines. Thus $S$ has the same cohomological properties of a general subset of $H$ with cardinality $\nu (d-1)-1$. Since $ \nu(d-1)-1 \le \binom{d}{2}$, we have $h^1(H,\Ii _{S,H}(d-2))=0$
and so $h^{1}( \Ii _{2q\cup E}(d))=0$ and a general $Y$ is smooth at $E$.

\quad \emph{Proof of Claim 2:} By Bertini's theorem \cite[page 137]{GH}, \cite[II.8.18]{h} and the
generality of $Y$ the set $\mathrm{Sing}(Y)$ is contained in the base locus $\Bb$ of $|\Ii _E(d)|$. Let $B_1,\dots ,B_s$,
$s\ge 0$, the positive dimensional components of $\Bb$ which are not one of the lines of $E$. Fix a general plane $H\subset
\CC\PP^3$
and set $S:= E\cap H$. Since $E$ is a general union of $\nu_n(d)$ twistor lines and (for this part of the proof) we may
first choose
$H$ independent from $E$, we may see $S$ as a general subset of $H$ with cardinality $\nu_n(d)$. By Lemma~\ref{b4} for each
$p\in H\setminus S$
we have $h^1(H,\Ii _{\{p\}\cup S,H}(d)) =0$. 
Since we saw that $h^1(\Ii _E(d-1)) =0$, the residual exact sequence of $H$ for the
sheaf $\Ii _{\{p\}\cup E}(d)$ 
$$0\to \Ii _E(d-1) \to \Ii _{\{p\}\cup E}(d) \to \Ii _{\{p\}\cup A,H}(d) \to 0,$$
gives $h^1( \Ii _{E\cup \{p\}}(d))=0$ and so
$p$ is not in the base locus of $|\Ii _E(d)|$. Thus either $s=0$ (and so $\mathrm{Sing}(Y)$ is finite) or each $B_i$ is an
irreducible curve meeting $E$. Since $Y$ is smooth at each point of $E$, each $B_i$ contains at most finitely many points of
$\mathrm{Sing}(Y)$. Thus $\mathrm{Sing}(Y)$ is finite. 

\end{proof}

To prove the third part of Theorem~\ref{i2}, we need the following lemma.

\begin{lemma}\label{b5}
Let $T\subset \CC\PP^3$ be any smooth quadric. Fix integers $b\ge a>0$ and $x$ with $0\le x \le (a+1)(b-1)$. Let $S\subset T$ be a
general union of $x$ points. Then $h^1(T,\Ii _{S,T}(a,b)) =0$ and $S$ is the scheme-theoretic base locus of $|\Ii _{S,T}(a,b)|$.
\end{lemma}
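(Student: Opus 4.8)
I would run, on $T\cong\CC\PP^1\times\CC\PP^1$, the argument used for $\CC\PP^2$ in Lemma~\ref{b4}, with the two rulings of $T$ in place of the lines there. By the semicontinuity theorem for cohomology it is enough to exhibit, for each admissible $x$, one finite set $A\subset T$ with $|A|=x$, $h^1(T,\Ii_{A,T}(a,b))=0$ and $A$ equal to the scheme-theoretic base locus of $|\Ii_{A,T}(a,b)|$; and, exactly as at the end of the proof of Lemma~\ref{b4}, it suffices to treat $x=(a+1)(b-1)$ (a general subset of cardinality $x'<x$ of such an $A$ keeps both properties: the surviving conditions stay independent, and deleting points of a reduced base scheme leaves a reduced base scheme supported on what remains). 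The case $b=1$ is trivial, so assume $b\ge 2$.

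The configuration I would use: let $p_1,p_2\colon T\to\CC\PP^1$ be the projections; fix $a$ distinct fibres $R_1,\dots,R_a$ of $p_1$ and $b$ distinct fibres $Q_1,\dots,Q_b$ of $p_2$, so that $\Oo_T(R_i)=\Oo_T(1,0)$, $\Oo_T(Q_j)=\Oo_T(0,1)$, while $\Oo_T(c,e)|_{R_i}\cong\Oo_{\CC\PP^1}(e)$ and $\Oo_T(c,e)|_{Q_j}\cong\Oo_{\CC\PP^1}(c)$ for all $c,e$. Choose general subsets $S_i\subset R_i$ with $|S_i|=b-1$ $(1\le i\le a)$ and general points $p_j\in Q_j$ for $1\le j\le b-1$, all avoiding the $ab$ points $R_i\cap Q_j$, and put $A:=S_1\cup\dots\cup S_a\cup\{p_1,\dots,p_{b-1}\}$, $\Gamma:=R_1\cup\dots\cup R_a\cup Q_1\cup\dots\cup Q_b$. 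Then $|A|=a(b-1)+(b-1)=(a+1)(b-1)$, and $\Gamma\in|\Oo_T(a,b)|$ is a reduced nodal curve containing $A$ in its smooth locus. To get $h^1(T,\Ii_{A,T}(a,b))=0$ I would peel off the lines of $\Gamma$ one at a time via the residual sequence~\eqref{eqa2}: peeling $R_1,\dots,R_a$ (on $R_i$ the trace $S_i$ has degree $b-1\le b$, so the sheaf there is $\Oo_{\CC\PP^1}(1)$, $h^1$-acyclic, and the first twist drops by one at each step) reduces us to $\Ii_{\{p_1,\dots,p_{b-1}\},T}(0,b)$; then peeling $Q_1,\dots,Q_{b-1}$ (on $Q_j$ one point survives and $\Oo_T(0,e)|_{Q_j}\cong\Oo_{\CC\PP^1}(0)$, so the sheaf there is $\Oo_{\CC\PP^1}(-1)$, again acyclic) reduces us to $\Oo_T(0,1)$, with vanishing $h^1$.

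Since $\Gamma\in|\Ii_{A,T}(a,b)|$, the base scheme of $|\Ii_{A,T}(a,b)|$ is contained in $\Gamma$. For $q\in\Gamma\setminus A$ I would peel off first the line $\ell$ of $\Gamma$ through $q$: because $|A\cap\ell|+1\le\deg(\Oo_T(a,b)|_\ell)+1$ (the $R_i$ carry $b-1\le b$ and the $Q_j$ carry $\le 1\le a$ points of $A$), the trace sequence on $\ell$ still has vanishing $h^1$, and peeling the remaining lines as before gives $h^1(T,\Ii_{A\cup\{q\},T}(a,b))=0$, so $q$ is not a base point; running the same computation with the length-two subscheme of $\ell$ at a point of $A$ in place of $q$ shows the base scheme is reduced along $A$, and since $A\subset\Gamma_{\mathrm{reg}}$ this gives that $A$ is the scheme-theoretic base locus of $|\Ii_{A,T}(a,b)|$. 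The one delicate point is the peeling bookkeeping: the $b-1$ ``extra'' points must be spread one per fibre $Q_j$ rather than heaped on a single line, and the $R_i$ must be peeled before the $Q_j$, so that at every step the scheme on the line being removed has degree at most one more than the restricted bundle there — exactly what $h^1$-vanishing on $\CC\PP^1$ requires; the boundary value $a=1$, where the trace sheaf $\Oo_{\CC\PP^1}(a-2)=\Oo_{\CC\PP^1}(-1)$ is still acyclic, needs only a one-line check.
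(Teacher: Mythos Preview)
Your argument is correct. Both your proof and the paper's follow the template of Lemma~\ref{b4}: reduce by semicontinuity to one explicit configuration $A$, then verify $h^1=0$ and the base-locus claim by peeling lines via residual sequences. The difference is in the configuration chosen. The paper places all $(a+1)(b-1)$ points on $a+1$ disjoint lines $R_1,\dots,R_{a+1}\in|\Oo_T(1,0)|$, with $b-1$ points on each, and runs the induction entirely in one ruling by peeling $R_{a+1}$ to pass from bidegree $(a,b)$ to $(a-1,b)$; the exact sequence~\eqref{eqb2.2} is the analogue of~\eqref{eqb2.1}. You instead take $a$ lines $R_i$ in one ruling carrying $b-1$ points each and spread the remaining $b-1$ points across $b-1$ lines $Q_j$ of the other ruling, completing to a curve $\Gamma=R_1\cup\dots\cup R_a\cup Q_1\cup\dots\cup Q_b\in|\Oo_T(a,b)|$.

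What your choice buys is an explicit member $\Gamma$ of $|\Ii_{A,T}(a,b)|$ containing $A$ in its smooth locus, so the containment $\Bb\subset\Gamma$ is immediate and the base-locus check reduces to points of $\Gamma$, exactly mirroring Lemma~\ref{b4}. The paper's union $R_1\cup\dots\cup R_{a+1}$ lies in $|\Oo_T(a+1,0)|$, not $|\Oo_T(a,b)|$, so that containment is not available and one must argue the base-locus statement by the induction on $a$ directly (which does work, since for $q\notin B$ peeling $R_{a+1}$ reduces to the inductive hypothesis with $a-1$). The paper's configuration is cleaner to write down (one ruling, uniform point count), while yours makes the base-locus step more transparent at the cost of bookkeeping two rulings. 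Your reduction from $x<(a+1)(b-1)$ to the maximal case is also valid: any subset $A'\subset A$ still imposes independent conditions (subsets of schemes with $h^1=0$ keep $h^1=0$), and the base locus of the larger system $|\Ii_{A',T}(a,b)|$ is contained scheme-theoretically in $\Bb=A$, hence reduced, with support exactly $A'$.
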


\begin{proof}
Since $S$ is general and $|S|=x=(a+1)(b-1)\le (a+1)(b+1) = h^0(T,\Oo _T(a,b))$, we
have $h^1(T,\Ii _{S,T}(a,b)) =0$. It is then sufficient to find a finite set $A\subset \CC\PP^2$ such that $|A|=x$,
 $h^1(\Ii _{A,T}(d)) =0$ and $A$ is the scheme-theoretic base locus of $|\Ii _{A,T}(d)|$. Take $a+1$ distinct elements $R_1,\dots
,R_{a+1}$ of $|\Oo _T(1,0)|$ and any $B_i\subset R_i$ with $|B_i| = b-1$. Set $B:= \cup _{i=1}^{a+1} B_i$. The divisor $R_{a+1}$ of
$T$ gives the residual exact sequence on $T$:
\begin{equation}\label{eqb2.2}
0 \to \Ii _{B\setminus B_{a+1}}(a-1,b) \to \Ii _{B,T}(a,b) \to \Ii _{B_1,R_{a+1}}(a,b)\to 0
\end{equation}

Note that $\Oo _{R_1}(a,b)$ is the degree $b$ line bundle on $R_1\cong \CC\PP^1$. Following the proof of Lemma~\ref{b4} we get Claims
1 and 2, i.e. we prove that we may take $A=B$ when $x=(a+1)(b-1)$. If $x< (a+1)(b-1)$ the same inductive proof using~\eqref{eqb2.2}
shows that we may take as $A$ any subset of $B$ with cardinality $x$.
\end{proof}

In the last step of the proof of Theorem~\ref{i2},we prove that $\nu_s (d)$ general twistor lines are contained in a smooth degree $d$ surface of
$\CC\PP^3$.

\begin{proof}[Proof of Theorem~\ref{i2} case (3):]
Thanks to the results contained in~\cite{hh} and the Density Lemma~\ref{b2}  we have $h^1(\Ii _E(d))=0$ for a general union
of $\nu_s (d)$ lines. By Bertini's theorem it is sufficient to prove that $|\Ii _E(d)|$ has no base points outside
$E$ and that a general $Y\in |\Ii_E(d)|$ is smooth at all points of $E$. The latter condition is satisfied thanks to the Proof of Theorem~\ref{i2} case (2), because $\nu_s (d)\le \nu_n(d)$. We saw that it is sufficient to check separately these two Zariski open properties.

Since $\nu_s (d) \le d$ for all $d\le 7$, by Theorem~\ref{i1} we could assume $d\ge 8$. However, we want to prove  by
induction on $d$ the following statement:

\quad \emph{Statement $\star _d$, $d\ge 0$:} For any general union $F\subset \CC\PP^3$ of $\nu_s (d)$ twistor lines the linear
system $|\Ii _F(d)|$ has no base point outside $F$. 
%[ togliere questo capoverso?] We just proved that if $\star _d$ is true, then part (1) of Theorem~\ref{i2} concerning smooth surfaces is true for the same
%integer $d$. 

When $\nu_s (d)\le d$ (and in particular if $d \le 7$) we proved $\star _d$ in the proof of Theorem~\ref{i1}
(the part studying $\dim \Sigma _q$ when $q\in \CC\PP^3\setminus E$).  

Thus we may assume $d \ge 8$ and assume $\star _{d-2}$ is true, i.e. assume that $|\Ii _G(d-2)|$ has no base points in $\CC\PP^3\setminus G$ for a general union $G$ of
$\nu_s  (d-2)$ twistor lines.  Let $T\subset \CC\PP^3$ be a smooth $j$-invariant quadric in the sense of~\cite{sv1} (also called \textit{real}), i.e. assume
that one of the two rulings of $T$, say the ruling $|\Oo _T(1,0)|$, is formed by twistor lines. Let $M\subset T$ be any
union of $x:= \nu_s (d)-\nu_s (d-2)$
distinct elements of $|\Oo _T(1,0)|$ and take a general union
$F\subset
\CC\PP^3$ of $\nu_s (d-2)$ twistor lines. By the inductive assumption $|\Ii _F(d-2)|$ has no base point outside $F$.
Since any two different twistor lines are disjoint, we have $M\cap F:= \emptyset$. Since we fixed $F$ after fixing $T$, we may
assume that each line of $F$ is transversal to $T$. Thus $|F\cap T| = 2\nu_s (d-2)$. Note that $F\cap T \subset T\setminus
M$. Set $E:= F\cup M$. 
Since $E$ is not a general union of $\nu_s (d)$ lines, we need to prove the following claim.

\quad \emph{Claim:} For the just defined set of lines $E$ it holds $h^1(\Ii _E(d))=0$.

\quad \emph{Proof of Claim:}  First of all, any two points of
$T$ not contained in a line of
$T$ are the intersection of
$T$ with a line of
$\CC\PP^3$. Thus the union
$D$ of $\nu_s (d-2)$ general lines of $\CC\PP^3$ has the property that $D\cap T$ is a general union of $2\nu_s (d-2)$ lines. Notice that  $D\cap T\subset T\setminus M$ and that  $2\nu_s (d-2) \le (d+1)(d+1-x)$.
Since
$D\cap T$ is general, we get $h^1(T,\Ii _{D\cap T,T}(d-x,d)) =0$. The residual exact sequence of $M$ in $T$ gives
$h^1(T,\Ii _{(D\cap T)\cup M,T}(d,d)) =0$. By the Density Lemma the same is true for a general union of $\nu_s (d-2)$ twistor
lines, i.e. $h^1(T,\Ii _{(F\cap T)\cup M,T}(d,d)) =0$. 
 Since $h^1(\Ii _F(d-2)) =0= h^1(T,\Ii _{(F\cap T)\cup M,T}(d,d))$, to prove that   $h^1(\Ii _E(d))=0$ it is sufficient to use
the residual exact sequence
\begin{equation*}%\label{eqb3}
0 \to \Ii _F(d-2)\to \Ii _E(d)\to \Ii _{(F\cap T)\cup M,T}(d,d)\to 0.
\end{equation*}

Having proved the claim, we now check that $|\Ii _E(d)|$ has no base point outside $E$. Since  $h^1(\Ii _E(d))=0$, it would be sufficient to prove
that $\nu_s (d)$ twistor lines are contained in a smooth degree $d$ surface. Since
$|\Ii _F(d-2)|$ has no base point outside $F$, $|\Ii _E(d)|$ has no base point outside $F\cup T$. Thus it
is sufficient to prove that $h^1(\Ii _{\{p\}\cup E}(d))=0$ for each $p\in T\setminus ((F\cap T)\cup M)$. Fix $p\in T\setminus
((F\cap T)\cup M)$. The residual exact sequence of $T$ gives
\begin{equation}\label{eqb4}
0 \to \Ii _F(d-2)\to \Ii _{\{p\}\cup E}(d)\to \Ii _{(F\cap T)\cup M\cup \{p\},T}(d,d)\to 0.
\end{equation}
Since $h^1( \Ii _F(d-2))=0$, the long cohomology exact sequence of~\eqref{eqb4} shows that it is sufficient to prove the vanishing of $h^1(T,\Ii _{(F\cap T)\cup
M\cup \{p\},T}(d,d))$. Since $F\cap M  =\emptyset$ and $p\notin M$, we have 
$$h^1(T,\Ii _{(F\cap T)\cup
M\cup \{p\},T}(d,d))=h^1(T,\Ii _{(F\cap T)\cup
\{p\},T}(d-x,d)).$$
 Note that $2\nu_s (d-2) \le (d-x+1)(d-1)$. By Lemma~\ref{b5} we have $h^1(T,\Ii _{(D\cap T)\cup
\{p\},T}(d-x,d)) =0$ for a general union of $\nu_s (d-2)$ lines of $\CC\PP^3$. By the Density Lemma~\ref{b2} this is true for a general
union of $\nu_s (d-2)$ twistor lines, i.e. $h^1(T,\Ii _{(F\cap T)\cup
\{p\},T}(d-x,d))=0$. Therefore we have proved $\star _d$ and hence the Theorem.
\end{proof}

Having proved Theorem~\ref{i2}, we pass to Theorem~\ref{i2.1}.

\begin{proof}[Proof of Theorem~\ref{i2.1}:]
Fix an integer $k\le \nu_j (d)$. Let $E\subset \CC\PP^3$ be a general union of $k$ twistor lines. The case $k=0$ is trivial, because a general surface of degree $\ge 5$ contains no line. Thus we may assume $k>0$ and in particular $d\ge 10$. Let $E\subset \CC\PP^3$ be a general union of $k$ twistor lines. Since $k \le \nu_s (d-6) \le \nu(d-8)$, we have $h^1(\Ii _E(t)) =0$ for all $t\ge d-8$. By Theorem~\ref{i2} case (3)
for each $t\ge d-8$ a general $W\in |\Ii _E(t)|$ is smooth. Thus we only need to prove that a general $Y\in |\Ii _E(d)|$ contains no line $L$ with $L\cap E =\emptyset$.
For any line $L\subset \CC\PP^3$ set $A(L):= |\Ii _{E\cup L}(d)|$ and $a(L):= h^0(\Ii _E(d)) -h^0(\Ii _{E\cup L}(d))$. $A(L)$ is a linear subspace of the projective space  $|\Ii _E(d)|$
with codimension $a(L)$ and it parametrizes all degree $d$ surfaces containing $E\cup L$. By the semicontinuity theorem for cohomology \cite[III.12.8]{h} for any integer $i\ge 0$ the set $\omega (i)$ of all lines $L\subset \CC\PP^3$ such that $L\cap E=\emptyset$ and $a(L)=i$ is a locally closed subset of $Gr (2,4)$. We call $\tau _i$ its dimension, i.e. the maximal dimension of one of the irreducible components of $\omega (i)$. To prove that a general $Y\in |\Ii _E(d)|$ contains no line $L$ with $L\cap E=\emptyset$, it is sufficient
to prove that $\tau _i < i$ (i.e. $\omega (0) =\emptyset$, $\omega (1)$ is finite or empty, and so on). Since $\dim Gr (2,4) =4$, it is sufficient to prove that $\tau _i < i$ for $0\le i \le 4$.
Note that $\nu_j (d) < \nu(d)$ and in particular $(d+1)(k+1) \le \binom{d+3}{3}$. By Corollary~\ref{b3} we have $h^1(\Ii _{E\cup L}(d)) =0$ for a general $L\in Gr (2,4)$,
i.e. $a(L) =d+1$ for a general $L\in Gr (2,4)$. Thus $\tau _i \le 3$ for all $i\le d$. Thus it is sufficient to prove that $\tau _i <i$ for $i=0,1,2,3$.

Fix $4$ distinct quadric surfaces $Q_1$, $Q_2$, $Q_3$ and $Q_4$ such that $Q_1\cap Q_2\cap Q_3\cap Q_4=\emptyset$. We fix these
quadrics before fixing
$E$. Let
$\Ee$ be the set of all unions $F$ of $k$ distinct twistor lines with $h^1(\Ii _F(t)) =0$ for $t\ge d-6$ and intersecting
transversally each quadric $Q_j$, $j=1,2,3,4$. By the Density Lemma the set of all $F\cap Q_j$, $F\in \Ee$, is Zariski dense in
the
$2$-dimensional variety $Q_j[k]$ parametrizing all subsets of $Q_j$ with cardinality $2k$.  Note that $2k \le 2\nu_j (d) \le
(d-5)^2 -2(d-5)$. Again, by the Density Lemma for a general $E\in \Ee$ we have $h^1(Q_j,\Ii _{E\cap Q_j,Q_j}(a,b)) =0$ for all $a\ge
d-6$, $b\ge d-6$, $j=1,2,3,4$, and each linear system $|\Ii _{E\cap Q_j}(a,b)|$ has no base point. Fix a line $L\subset \CC\PP^3$
such that $L\cap E=\emptyset$. 
 
% \quad (d1) 

 First assume that $L$ is contained in one of the quadrics $Q_1,Q_2,Q_3,Q_4$, say in $Q_j$ as an element of $|\Oo _{Q_j}(0,1)|$. Since no line in $E$ is contained in $Q_j$, we have the following residual exact sequence
 \begin{equation}\label{eqb5}
 0 \to \Ii _E(d-2) \to \Ii _{E\cup L}(d) \to \Ii _{E\cap Q_j,Q_j}(d)\to 0 
 \end{equation}
 Since $E\in \Ee$ we have $h^1(\Ii _E(d-2)) =0$ and $h^1(Q_j,\Ii _{E\cap Q_j,Q_j}(d,d-1)) =0$. Since we have $h^1(Q_j, \Ii _{E\cap Q_j,Q_j}(d))=h^1(Q_j,\Ii _{E\cap Q_j,Q_j}(d,d-1))$,
\eqref{eqb5} gives $L\in \omega (d+1)$.

% \quad (d2) 
 Now assume $L\nsubseteq Q_j$ for any $j$. 
 In this case we want to prove that $ h^0(\Ii_{E\cup L}(d)) \le h^0(\Ii_E(d)) -4$, i.e. $L\notin \omega (i)$ for $i\le 3$.
 Fix $p_j\in L\cap Q_j$. To prove that $L\notin \omega (i)$ for $i\le 3$ (and so that $\tau_{i}<i$ for $i\le 3$) it is sufficient to prove that
 $h^0(\Ii _{E\cup \{p_1,p_2,p_3,p_4\}}(d)) =h^0(\Ii _E(d)) -4$, i.e. that $h^1(\Ii _{E\cup \{p_1,p_2,p_3,p_4\}}(d)) =0$. 
 We prove that this is the case
 if $p_j$ is not contained in $Q_i$ when $i\ne j$. With this assumption we have the $4$ residual exact sequences
  \begin{equation}\label{eqb9}
 0 \to \Ii _E(d-8) \to \Ii _{E\cup \{p_1\}}(d-6) \to \Ii _{(E\cap Q_1)\cup \{p_1\},Q_1}(d-6)\to 0
 \end{equation}
  \begin{equation}\label{eqb8}
 0 \to \Ii _{E\cup \{p_1\}}(d-6) \to \Ii _{E\cup \{p_1,p_2\}}(d-4) \to \Ii _{(E\cap Q_2)\cup \{p_2\},Q_3}(d-4)\to 0
 \end{equation}
  \begin{equation}\label{eqb7}
 0 \to \Ii _{E\cup \{p_1,p_2\}}(d-4) \to \Ii _{E\cup \{p_1,p_2,p_3\}}(d-2) \to \Ii _{(E\cap Q_3)\cup \{p_3\},Q_3}(d-2)\to 0
 \end{equation}
 \begin{equation}\label{eqb6}
 0 \to \Ii _{E\cup \{p_1,p_2,p_3\}}(d-2) \to \Ii _{E\cup \{p_1,p_2,p_3,p_4\}}(d) \to \Ii _{(E\cap Q_4)\cup \{p_4\},Q_4}(d)\to 0
 \end{equation}
 We now use repeatedly Lemma~\ref{b5} and its consequences.
\begin{itemize}
\item Since $h^1(\Ii _E(d-8)) =0$ and $ h^1(Q_1,\Ii _{(E\cap Q_1)\cup \{p_1\},Q_1}(d-6))=0$, \eqref{eqb9} gives $h^1(\Ii _{E\cup \{p_1\}}(d-6))=0$.
\item Since we have $h^1(Q_2,\Ii _{(E\cap Q_2)\cup \{p_2\},Q_3}(d-4))=0$, \eqref{eqb8} gives $h^1( \Ii _{E\cup \{p_1,p_2\}}(d-4))=0$.
\item  Since $h^1(Q_3,\Ii _{(E\cap Q_3)\cup \{p_3\},Q_3}(d-2))=0$, \eqref{eqb7} gives
$h^1(\Ii _{E\cup \{p_1,p_2,p_3\}}(d-2))=0$.
\item  Since we have $h^1(Q_4,\Ii _{(E\cap Q_4)\cup \{p_4\},Q_4}(d)) =0$, \eqref{eqb6} gives $h^1(\Ii _{E\cup \{p_1,p_2,p_3,p_4\}}(d)) =0$, concluding the proof in this case.
\end{itemize}

Assume now that $p_i\in Q_j$ for some $i\ne j$. First of all, to do the construction without any
modification
we only need $p_i\notin Q_j$ for all $i<j$ (we do not care if $Q_3$ contains $p_4$, if $Q_2$ contains $p_3$ or $p_2$ or
if $Q_1$ contains $p_2$ or $p_3$ or $p_4$). Instead
of fixing
$4$ smooth quadrics, we fix $10$ smooth quadrics
$Q[h]$, $1\le h\le 10$, such that any $4$ of these quadrics have no common point. We set $Q_1:= Q[1]$ and fix $p_1\in L\cap
Q_1$. Since any $4$ of the quadrics $Q[h]$ have empty intersection, there is $Q[h_1]$ such that $p_1\notin Q[h_1]$. We set
$Q_2:= Q[h_1]$ and take $p_2\in L\cap Q_2$. Since any  $4$ of the quadrics $Q[h]$ have empty intersection, there is $Q[h_2]$
with $p_1\notin Q[h_2]$ and $p_2\notin Q[h_2]$. Set $Q_3:= Q[h_2]$ and fix $p_3\in Q_3\cap L$. Since any  $4$ of the quadrics
$Q[h]$ have empty intersection, there is $Q[h_3]$ with $\{p_1,p_2,p_3\}\cap Q[h_3] =\emptyset$. Set $Q_4:= Q[h_3]$ and
take any $p_4\in L\cap Q_4$.
Using again the same argument as before for these new four quadrics, we conclude the proof.
\end{proof}

%\begin{remark}
%Thanks to the Density Lemma, the thesis of Theorem~\ref{i2.1} can be also stated saying that
%$Y$ contains no line different from the one given in the set $E$.
%\end{remark}

We now pass to the last theorem. To precise what stated in the introduction we
set the following notation. Let $z\in\CC^{4}$. For any $\alpha=(\alpha_{0},\alpha_{1},\alpha_{2},\alpha_{3})\in\NN^{4}$, set
$$z^{\alpha}= z_{0}^{\alpha_{0}}z_{1}^{\alpha_{1}}z_{2}^{\alpha_{2}}z_{3}^{\alpha_{3}}.$$
For any $\alpha\in\NN^{4}$ we write $|\alpha|=\alpha_{0}+\alpha_{1}+\alpha_{2}+\alpha_{3}$.
We now extend the map $j$ to the $\binom{d+3}{3}$-dimensional vector space $H^0(\Oo _{\CC\PP^3}(d))$. 
%
%Consider the complex vector space $H^0(\Oo _{\CC\PP^3}(d))$
%of complex dimension $\binom{d+3}{3}$ (and hence of dimension $2\binom{d+3}{3}$ over
%$\RR$ with respects to any real
%structure on $H^0(\Oo _{\CC\PP^3}(d))$).
%We now deal with the map induced by $j$ (and, again, denoted by $j$ as well), on $H^0(\Oo _{\CC\PP^3}(d))$, 
Consider the map $j: H^0(\Oo _{\CC\PP^3}(d)) \to H^0(\Oo _{\CC\PP^3}(d))$
defined in the following way
$$
H^0(\Oo _{\CC\PP^3}(d)) \ni f = \sum _{|\alpha|=d} c_{\alpha} z^{\alpha}\mapsto j(f) = \sum _{|\alpha|=d}\hat{c}_{\alpha}z^{\alpha},
$$
where $\hat{c}_{\alpha _0,\alpha _1,\alpha _2,\alpha _3} = (-1)^{\alpha_{0}+\alpha_{2}}\bar{c}_{\alpha _1,\alpha _0,\alpha _3,\alpha _2}$.

Note that $j$ is $\RR$-linear and that  $j^2(f) = (-1)^d(f)$. For any $z\in \CC^4$ we
have
$f(j(z)) =(-1)^d\overline{j(f)(z)}$. We say that $f = \sum _\alpha c_{\alpha} z^{\alpha}$ is
$j$-invariant if and only if there is a constant $a\in \CC\setminus \{0\}$ such that $j(f)=af$
i.e.: if $j$ fix the line $\CC f$.

By the Density Lemma, the set of all
elements of $H^0(\Oo _{\CC\PP^3}(d))$ associated to a fixed $a\in  \CC\setminus \{0\}$ is a real vector space of dimension $\binom{d+3}{3}$
over $\RR$.
Hence 
$j$ induces a bijection 
%(again called $\tilde{\j}$) 
between the $\binom{d+3}{3}-1$ complex projective space $|\Oo _{\CC\PP^3}(d)|$ and itself. 

\begin{definition}
 We say that a surface $Y\in |\Oo _{\CC\PP^3}(d)|$ is $j$-\textit{invariant} if
 $j(z)\in Y$ for every $z\in Y$, i.e.: if $j(Y) = Y$.
\end{definition}

In some paper (e.g.~\cite{sv1}), a $j$-invariant surface $Y$ is called \textit{real} with respect to $j$.

We pass now to the proof of Theorem~\ref{i3}.
More precisely, for a general union $E\subset \CC\PP^3$ of $k$ twistor line we will prove the existence of a smooth $Y\in |\Ii _E(d)|$ such that $j(Y)=Y$ and $Y$ contains no line $L\subset \CC\PP^3$ with $L\cap E=\emptyset$ and in particular it contains no other twistor line.

\begin{proof}[Proof of Theorem~\ref{i3}] By Corollary~\ref{b3} we have
$h^1(\Ii _E(d)) =0$ and so $h^0(\Ii _E(d)) =\binom{d+3}{3} -k(d+1)$. 
The integer $\binom{d+3}{3} = (d+3)(d+2)(d+1)/6$ is even if and only if either $d$ is odd or $d\equiv 2\pmod{4}$.
The integer $d+1$ is odd if and only if $d$ is even, thus $\binom{d+3}{3} -k(d+1)$ is odd, i.e. $\dim |\Ii _E(d)|$ is even, if and only if either $d\equiv 2\pmod{4}$ and $k$ is odd or 
$d\equiv 0\pmod{4}$ and $k$ is even, i.e. in the cases needed to prove Theorem~\ref{i3}.

Fix any union $E\subset \CC\PP^3$ of finitely many twistor fibers. For each $t\in \ZZ$ the projective space $|\Ii _E(t)|$ is $j$-invariant.

\quad \emph{Claim:} If $\dim |\Ii _E(t)|$ is even, then the set of all $j$-invariant element of $|\Ii _E(t)|$ is Zariski dense in $|\Ii _E(t)|$.

\quad \emph{Proof of Claim:} First assume $h^0(\Ii _E(t)) =1$, i.e. assume $|\Ii _E(t)| =\{Y\}$ for some $Y$. Since $j(E)=E$, w have $\{j(Y)\} = \{Y\}$ and hence $j(Y)=Y$.
Now assume $h^0(\Ii _E(t)) >1$ and $h^0(\Ii _E(t))$ odd, say $h^0(\Ii _E(t)) =2m+1$ for some positive integer $m$. Fix a general set $S\subset \CC\PP^3$ such that $|S|=m$.
Since $S$ is general, we have $h^0(\Ii _{E\cup S\cup j(S)}(t)) =1$. Since $j(E\cup S\cup j(S)) = E\cup S\cup j(S)$, the proof just given shows that the only element
$A\in |\Ii _{E\cup S\cup j(S)}(t)|$ satisfies $j(Y) =Y$.
By our assumptions on $d$ and $k$ there is $Y\in |\Ii _E(d)|$ such that $j(Y)=Y$. Thus  $j$ induces an anti-holomorphic involution on the projective space
$|\Ii _E(d)|$ with at least one fixed point. Thus the set $\Gamma$ of all $Y\in |\Ii _E(d)|$ with $j(Y)=Y$ is Zariski dense in $|\Ii _E(d)|$.

Hence (by the proof of Theorem~\ref{i2.1})
there is a smooth $Y\in \Gamma$ containing no line $L$ with $L\cap E=\emptyset$.
\end{proof}

\end{document}